\title[Degenerate center singularities]{Symmetries of degenerate center singularities of plane vector fields}
\author{Sergiy Maksymenko}
\address{Topology dept., Institute of Mathematics of NAS of Ukraine, Tere\-shchenkivska st. 3, Kyiv, 01601 Ukraine}
\keywords{Center singularity, shift map, orbit preserving diffeomorphism, weak homotopy type}
\subjclass[2000]{37C10, 37C27, 37C55}
\newcommand\flatsign[1]{\bar{#1}}
\newcommand\TC{TC}
\newcommand\hdif{h}
\newcommand\kdif{k}
\newcommand\orb{o}
\newcommand\dd[1]{\frac{\partial}{\partial #1}}
\newcommand\ddd[2]{\frac{\partial #1}{\partial #2}}
\newcommand\Wr[1]{\mathsf{W}^{#1}}
\newcommand\dAx{\flatsign{X}}
\newcommand\dAy{\flatsign{Y}}
\newcommand\DD{\mathcal{D}}
\newcommand\EE{\mathcal{E}}
\newcommand\LA{L(\AFld)}
\newcommand\tafunc{\widetilde{\afunc}}
\newcommand\thdif{\widetilde{\hdif}}
\newcommand\tdisk{B}
\newcommand\tdisko{\check{\tdisk}}
\newcommand\torb{\widetilde{\orb}}
\newcommand\rrho{r}
\newcommand\tbfunc{\widetilde{\bfunc}}
\newcommand\tz{\widetilde{z}}
\newcommand\jt[1]{\jet(#1)}
\newcommand\Apl{\mathcal{A}_{++}}
\newcommand\Amn{\mathcal{A}_{--}}
\newcommand\Aopl{\mathcal{A}_{+-}}
\newcommand\Aomn{\mathcal{A}_{-+}}
\newcommand\Ag{\mathcal{A}}
\newcommand\Ahom{\mathbf{A}}
\newcommand\Bhom{\mathbf{B}}
\begin{document}
\begin{abstract}
Let $D^2 \subset\mathbb{R}^2$ be a closed unit $2$-disk centered at the origin $O\in \mathbb{R}^2$, and $F$ be a smooth vector field such that $O$ is a unique singular point of $F$ and all other orbits of $F$ are simple closed curves wrapping once around $O$.
Thus topologically $O$ is a ``center'' singularity.
Let $\mathcal{D}^{+}(F)$ be the group of all diffeomorphisms of $D^2$ which preserve orientation and orbits of $F$.

In the previous paper the author described the homotopy type of $\mathcal{D}^{+}(F)$ under assumption that the $1$-jet $j^1 F(O)$ of $F$ at $O$ is non-degenerate.
In this paper degenerate case of $j^1 F(O)$ is considered.
Under additional ``non-degeneracy assumptions'' on $F$ the path components of $\mathcal{D}^{+}(F)$ with respect to distinct weak topologies are described.
\end{abstract}

\maketitle 
\section{Introduction}
Let $\disk=\{ x^2+y^2\leq 1\} \subset\RRR^2$ be a closed unit $2$-disk centered at the origin $\orig\in\RRR^2$, $\Vman \subset\RRR^2$ be a closed subset diffeomorphic with $\disk$, $z\in\Int\Vman$, and $$\AFld=\AFld_1\dd{x}+\AFld_2\dd{y}$$ be a $\Cinf$ vector field on $\Vman$.
We will say that $\AFld$ is a \myemph{\TC\ vector field on $\Vman$ with topological center at $z$} if it satisfies the following conditions:
\begin{enumerate}
\item[(T1)] $z$ is a unique singular point of $\AFld$,
\item[(T2)] $\AFld$ is tangent to $\partial\Vman$, so $\partial\Vman$ is an orbit of $\AFld$, and
\item[(T3)] all other orbits of $\AFld$ are closed.
\end{enumerate}
If $\Vman=\disk$, then we will always assume that $\AFld(\orig)=0$.

Let $\AFld$ be a \TC\ vector field on $\disk$.
Then it easily follows from Poincar\'e-Bendixson theorem,~\cite{PalisdeMelo}, that there exists a \myemph{homeomorphism} $$\hdif=(\hdif_1,\hdif_2):\disk\to\disk$$ such that $\hdif(\orig)=\orig$ and for every other orbit $\orb$ of $\AFld$ its image $\hdif(\orb)$ is the circle of some radius $c\in(0,1]$ centered at origin, see Figure~\ref{fig:tcvf}.
\begin{figure}[ht]
\includegraphics[height=1.5cm]{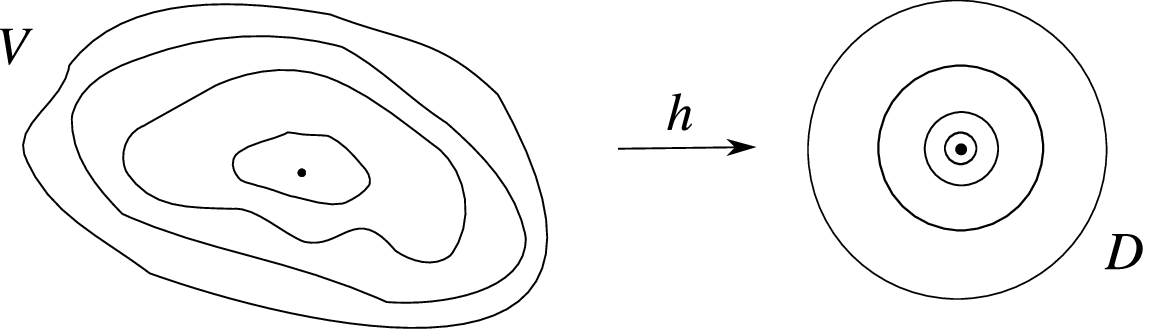}
\caption{}
\protect\label{fig:tcvf}
\end{figure}
This motivates the term \TC\ which we use, see~\cite{Maks:LocExtrPer}.
Moreover, it can be assumed that the restriction $\hdif:\disko\to\disko$ is a $\Cinf$ diffeomorphism.

\begin{definition}\label{def:first_strong_int}
A continuous function $\func:\disk\to[0,1]$ will be called a \myemph{first strong intergal} for $\AFld$ if 
\begin{enumerate}
 \item[(i)]
$\func$ is $\Cinf$ on $\disko$ and has no critical points in $\disko$,
 \item[(ii)] $\func^{-1}(0)=\orig$, $\func^{-1}(1)=\partial\disk$, and for $c\in [0,1]$ the set $\func^{-1}(c)$ is an orbit of $\AFld$.
\end{enumerate}
\end{definition}
Notice that we do not require that $\func$ is $\Cinf$ at $\orig$.
It also follows from (ii) that $\func$ takes distinct values on distinct orbits.
For instance if $\hdif$ is $\Cinf$ on $\disko$, then the function 
$\func =\hdif_1^2+\hdif_2^2$ is the \myemph{first strong integral} for $\AFld$.

Consider the following matrix
$$
\nabla\AFld = \left(
\begin{array}{cc}
 \ddd{\AFld_1}{x}(\orig) & \ddd{\AFld_1}{y}(\orig) \\ [2mm]
 \ddd{\AFld_2}{x}(\orig) & \ddd{\AFld_2}{y}(\orig)
\end{array}
\right).
$$
We will call $\nabla\AFld$ a \myemph{linear part} or \myemph{linearization} of $\AFld$ at $\orig$.

Suppose $\AFld$ is a \TC\ vector field and $\nabla\AFld$ is non-degenerate.
Then it can be shown that there are local coordinates at $\orig$ in which $$j^1\AFld(\orig)=-ay\dd{x}+ay\dd{y}$$ for some $a\not=0$, so the $1$-jet of $\AFld$ at $\orig$ is a ``rotation''.
This class of singularities is well-studied from many points of view, see e.g. \cite{Poincare, Lyapunov, Takens:AIF:1973, Belitsky:FA:85, Sibirsky1, Sibirsky2}.
In particular, in~\cite{Takens:AIF:1973} normal forms of such vector fields are obtained.

Denote by $\DA$ the group of $\Cinf$ diffeomorphisms $\hdif$ of $D^2$ such that $\hdif(\orb)=\orb$ for each orbit $\orb$ of $\AFld$.
Let also $\DApl$ be the subgroup of $\DA$ consisting of all orientation preserving diffeomorphisms, and $\DAd$ be the subgroup of $\DApl$ consisting of all diffeomorphisms fixed of $\partial\disk$.
We endow $\DA$, $\DApl$, and $\DAd$ with the weak $\Wr{\infty}$ topologies, see \S\ref{sect:Kr_deform}.

The main result of~\cite{Maks:LocExtrPer} describes the homotopy types of $\DAd$ and $\DApl$ for a \TC\ vector field with non-degenerate $\nabla\AFld$, see (i) of Proposition~\ref{pr:info_on_TC}.
Moreover, in~\cite{Maks:PartitPresDif} 

In this paper we study \TC\ vector fields with degenerate $\nabla\AFld$ and describe path components of $\DApl$ and $\DAd$ under additional ``non-degeneracy'' assumptions, see Theorems~\ref{th:DA_EA_weak_he}, \ref{th:EAd_EApl}, and \ref{th:EA_non-deg-cond}.
The obtained results are not so complete as in the non-degenerate case due to the variety of normal forms.

Together with results of~\cite{Maks:LocExtrPer} these theorems will be used in another paper to extend~\cite{Maks:AGAG:2006} to a large class of $\Cinf$ functions on surfaces.

\section{Formulation of results}
Let $\AFld$ be a \TC\ vector field on $\disk$.
Denote by $\EA$ the subset of $\Ci{\disk}{\disk}$ which consists of all maps $\hdif:\disk\to\disk$ satisfying the following conditions:
\begin{itemize}
 \item 
$\hdif(\orb)=\orb$ for every orbit $\orb$ of $\AFld$.
In particular, $\hdif(\orig)=\orig$.

 \item
$\hdif$ is a local diffeomorphism at $\orig$.
\end{itemize}

Evidently, $\EA$ is a subsemigroup of $\Ci{\disk}{\disk}$ with respect to the usual composition of maps.
Consider the map
$$
\jet:\EA\to \GLR{2}, \qquad \jet(\hdif) = J(\hdif,\orig),
$$
associating to every $\hdif\in\EA$ its Jacobi matrix $J(\hdif,\orig)$ at $\orig$.
Let $$\LA := \jet(\EA)$$
be the image of $\jet$.
Then a priori $\LA$ is a subsemigroup of $\GLR{2}$.

Let $\EApl=\jetinv(\GLRpl{2})$ be the subset of $\EA$ consisting of all maps $\hdif$ with positive Jacobian at $\orig$.

Let also $\EAd \subset \EApl$ be the subsemigroup consisting of all maps $\hdif$ fixed on $\partial\disk$, i.e. $\hdif(x)=x$ for all $x\in\partial\disk$.
Evidently,
\begin{equation}\label{equ:inclusions:D_E}
\DA\subset\EA,
\qquad 
\DApl\subset\EApl,
\qquad
\DAd\subset\EAd.
\end{equation}

For $r=0,1,\ldots,\infty$ denote by $\EAr{r}$, ($\EAplr{r}$, etc.) the space $\EA$, ($\EApl$, etc.) endowed with weak $\Wr{r}$ topology~\S\ref{sect:Kr_deform}.
Let also $\EidAr{r}$, ($\EidAplr{r}$, etc.) be the path component of the identity map $\id_{\disk}$ in $\EA$, ($\EApl$, etc.) with respect to $\Wr{r}$ topology at least for $r\geq1$.
Evidently, each $\hdif\in\EA\setminus\EApl$ (if it exists) changes orientation of $\disk$, whence $\EAplr{r}$ consists of full path components of $\EAr{r}$ for any $r$.
In particular,
$$ \EidAplr{r} = \EidAr{r} \qquad \forall r=0,1,\ldots,\infty.$$

It turns out that it is more convenient to work with $\EA$ instead of $\DA$.
Moreover, the following theorem shows that such a replacement does not loose the information about homotopy types.

We will assume throughout that the identity map $\id_{\disk}$ is a base point and therefore it will be omitted from notation.
For instance we denote the $n$-th homotopy group $\pi_n(\EAr{r},\id_{\disk})$ simply by $\pi_n\EAr{r}$ and so on.

\begin{theorem}\label{th:DA_EA_weak_he}
Let $\AFld$ be a \TC\ vector field on $\disk$.
Let $\DD$ denotes one of the groups $\DA$, $\DApl$, or $\DAd$, and $\EE$ be the corresponding semigroup $\EA$, $\EApl$, or $\EAd$. 
By $\DD^r$ (resp. $\EE^r$) we denote the topological space $\DD$ (resp. $\EE$) endowed with $\Wr{r}$ topology.
Then
\begin{enumerate}
 \item[\rm(1)] 
the inclusion $\DD^r\subset\EE^r$ is a \myemph{weak homotopy equivalence}%
\footnote{Recall that the map $i:\DD\to\EE$ is a  \myemph{weak homotopy equivalence} if for each $n\geq0$ the induced map $i_n:\pi_n(\DD,x)\to\pi_n(\EE,x)$ of homotopy sets (groups for $n\geq1$) is a bijection for each $x\in\DD$.} for $r\geq1$;
 \item[\rm(2)] 
in $\Wr{0}$ topology the induced map $\pi_0\DD^{0}\to\pi_0\EE^{0}$ is a \myemph{surjection};
 \item[\rm(3)]
for each $r\geq0$ the semigroup $\pi_0\EE^{r}$ is a \myemph{group} and any two path components of $\EE^r$ are homeomorphic each other.
\end{enumerate}
\end{theorem}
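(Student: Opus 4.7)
The proof has three interlocking parts, all based on the flow $\AFlow_t$ of $\AFld$. Every non-singular orbit $\orb$ of $\AFld$ is a simple closed curve on which the flow is periodic with some period $T(\orb) > 0$. Given $\hdif \in \EA$, the condition $\hdif(\orb) = \orb$ allows one to write $\hdif(z) = \AFlow_{\alpha(z)}(z)$ on each non-singular orbit, for a shift function $\alpha$ that is smooth on $\disko$ and single-valued modulo $T(\orb)$. The non-degeneracy condition at $\orig$ (that $\hdif$ is a local diffeomorphism there, with $j^1 \hdif(\orig) \in \GLRpl{2}$ on $\EApl$) controls the asymptotics of $\alpha$ as $z \to \orig$. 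This shift representation is the central technical tool.

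For parts (1) and (2), given a continuous map $\gamma: (S^n, *) \to (\EE^r, \id)$, my plan is to construct a homotopy $\gamma_s \in \EE^r$ from $\gamma_0 = \gamma$ to a map $\gamma_1$ landing in $\DD^r$. Representing $\gamma(\theta)(z) = \AFlow_{\alpha_\theta(z)}(z)$, one modifies $\alpha_\theta$ by a smooth multiplier along each orbit and adds a cutoff correction near $\orig$, so that the resulting map is an honest diffeomorphism of $\disk$ preserving each orbit. Applied to a single element rather than to an entire family, this yields a path from any $\hdif \in \EE^0$ to an element of $\DA$, giving part (2). Part (1) is obtained by extending the construction continuously across the compact parameter space $S^n$, which relies on smooth dependence of $\alpha_\theta$ on $\theta$ in $\Wr{r}$ with $r \geq 1$ (this smooth dependence is precisely what fails in $r=0$, which is why (1) weakens to the surjection (2) there).

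For part (3), parts (1)-(2) imply every path component of $\EE^r$ meets $\DA$. For any $g \in \DA$, left multiplication $L_g: \EE^r \to \EE^r$, $L_g(\hdif) = g \circ \hdif$, is a homeomorphism with inverse $L_{g^{-1}}$; if $g, g' \in \DA$ represent two classes in $\pi_0 \EE^r$, then $L_{g' \circ g^{-1}}$ carries the component of $g$ onto that of $g'$, so all components of $\EE^r$ are pairwise homeomorphic. Moreover, any class in $\pi_0 \EE^r$ admits a representative $g \in \DA$, and then $[g^{-1}] \in \pi_0 \EE^r$ is a two-sided inverse for $[g]$ in the semigroup, so $\pi_0 \EE^r$ is a group.

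The principal technical difficulty will lie in keeping the deformations $\Wr{r}$-continuous at $\orig$, where the flow degenerates and $\alpha$ is not defined. The local diffeomorphism condition at $\orig$ must be encoded carefully: one needs to blend the shift-based construction with a linearized model at $\orig$ that respects the prescribed $1$-jet, so that the modified maps and all their derivatives converge at $\orig$ uniformly across the parameter space. The case-by-case nature of admissible $1$-jets (noted in the introduction as the source of the variety of normal forms) is what prevents part (1) from holding in the $\Wr{0}$ topology.
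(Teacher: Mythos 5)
Your overall strategy is the paper's: represent elements of $\EApl$ as shifts $\hdif(z)=\AFlow(z,\alpha(z))$ along orbits, damp the shift function by a multiplier constant along orbits, and transport path components by left translation. However, your treatment of the singular point is misdirected, and this is a genuine gap. No ``blending with a linearized model at $\orig$ respecting the prescribed $1$-jet'' is needed or possible in any obvious sense; in the paper's construction (Lemma~\ref{lm:prop_homot_A}) the cutoff $\nu\circ\func$ is identically $1$ near $\orig$, so the deformed map \emph{coincides} with the original map near $\orig$, where it is already a local diffeomorphism, and the cutoff instead kills the shift away from $\orig$, making the map fixed outside an invariant neighbourhood $\Uman_b$. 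What turns the result into a global diffeomorphism is the choice of $b$ so that every $\Omega_k:\Uman_b\to\Uman_b$ in the family is a diffeomorphism \emph{uniformly in the parameter}; this uses compactness of the parameter space and $r\geq1$ (continuity of first derivatives in the parameter), and it is exactly this uniform choice of $b$ --- not ``smooth dependence of $\alpha_\theta$'' --- that fails at $r=0$ for families while still working for a single map, which is why (1) degenerates to the surjection (2). You also omit the verification that the cutoff preserves the local-diffeomorphism property: since the multiplier is constant along orbits, $\AFld\bigl((\nu\circ\func)\Lambda_k\bigr)=(\nu\circ\func)\,\AFld(\Lambda_k)>-1$, and one concludes via the criterion $\AFld(\afunc)\neq-1$ of Lemma~\ref{lm:when_h_in_DApl}.

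Two further gaps. First, the shift function on $\disko$ is only defined modulo $n\theta$ (Lemma~\ref{lm:exist_shift_func}); selecting a single \emph{continuous} family $\Lambda$ over the whole parameter space is a nontrivial step which the paper resolves by citing Lemma~\ref{lm:ext_shift_func_via_homotopy} (from [M9]), and it requires the parameter space to be simply connected. Your families over $S^n$ break precisely at $n=1$: the monodromy of $\Lambda$ around a loop is an integer multiple of $\theta$, and this integer is exactly what generates $\pi_1(\EAplr{r},\EAdr{r})\approx\ZZZ$ in Theorem~\ref{th:EAd_EApl}, so for loops your construction need not close up. Second, a weak homotopy equivalence requires \emph{bijectivity} on all $\pi_n$; deforming based maps $S^n\to\EE$ into $\DD$ only gives surjectivity. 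The paper instead proves $\pi_n(\EE^r,\DD^r)=0$ using maps of pairs $(I^n,\partial I^n)$, which demands that the homotopy keep $\omega(\partial I^n)$ inside $\DD$ throughout --- guaranteed by part (c) of Lemma~\ref{lm:prop_homot_A}, since diffeomorphisms stay diffeomorphisms under the cutoff; nothing in your sketch addresses this relative constraint, nor the orientation-reversing components in the case $\DD=\DA$, $\EE=\EA$, which the paper handles separately via Lemma~\ref{lm:change_to_diff} and left translation. Your part (3) argument, by contrast, is essentially the paper's Claim and is correct as stated.
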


\begin{remark}\rm
In general a topological semigroup may have path components which are non homeomorphic each other.
For instance this often so for the semigroup of continuous maps $C(X,X)$ of a topological space $X$ with non-trivial homotopy groups, see e.g.~\cite{Hansen:JMOx:1974}.
\end{remark}

The next result describes the relative homotopy groups of the pair $(\EApl,\EAd)$.
\begin{theorem}\label{th:EAd_EApl}
Let $\AFld$ be a \TC\ vector field on $\disk$.
Then for each $r\geq0$
$$
\pi_n(\EAplr{r},\EAdr{r}) = 
\begin{cases}
\ZZZ, & n=1, \\
0,   & \text{otherwise}.
\end{cases}
$$
Hence the inclusion $\EAd\subset\EApl$ yields \myemph{isomorphisms} 
\begin{equation}\label{equ:pin_EAd_EApl}
\pi_n\EAdr{r} \to \pi_n\EAplr{r}, \qquad n\geq2,
\end{equation}
and we also have the following exact sequence:
\begin{multline}\label{equ:exseq_EAd_EApl}
0 \to \pi_1\EAdr{r} \to \pi_1\EAplr{r} \to 
\ZZZ \to \\ \to \pi_0\EAdr{r} \to \pi_0\EAplr{r} \to 0.
\end{multline}
\end{theorem}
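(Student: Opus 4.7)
The plan is to identify the pair $(\EApl,\EAd)$ with a fibration whose base is weakly equivalent to $S^1$, and then invoke the long exact sequence of a fibration.

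Consider the restriction map $\rho:\EApl\to E$, $\rho(\hdif)=\hdif|_{\partial\disk}$, where $E\subset C^{\infty}(\partial\disk,\partial\disk)$ is the subspace of orientation-preserving smooth self-maps of degree $+1$. That $\rho$ lands in $E$: since $\hdif\in\EApl$ is a local diffeomorphism at $\orig$ with positive Jacobian, on each orbit $\orb$ sufficiently near $\orig$ the restriction $\hdif|_{\orb}$ is an orientation-preserving circle diffeomorphism, hence of degree $+1$, and the integer-valued function $\orb\mapsto\deg(\hdif|_\orb)$ is continuous in $\orb$, so it is constantly $+1$ across the family of orbits including $\partial\disk$. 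The fiber $\rho^{-1}(\id_{\partial\disk})$ is precisely $\EAd$.

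Next, $E$ deformation retracts onto the flow-rotation circle $\{\AFlow_s|_{\partial\disk}:s\in\RRR/T\ZZZ\}\cong S^1$, where $T$ is the period of $\partial\disk$: every $f\in E$ lifts to $\tilde f:\RRR\to\RRR$ with $\tilde f(t+T)=\tilde f(t)+T$, so $\tilde f-\id$ is $T$-periodic, and the linear homotopy from $\tilde f-\id$ to its $T$-average is again $T$-periodic and descends to a deformation in $E$ terminating at a flow-rotation. Hence $E\simeq S^1$. Moreover, the flow itself provides a section $\sigma:S^1\to\EApl$, $\sigma(s)=\AFlow_s$, since $\AFlow_s$ preserves every orbit and has positive Jacobian at $\orig$.

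The technical heart of the argument is to verify that $\rho$ is a Serre (or quasi-)fibration with respect to each $\Wr{r}$ topology. The idea is to lift paths (and higher-dimensional cubes) in $E$ using the flow: a path $f_t\in E$ with $f_0=\rho(\hdif_0)$ admits a smooth family of ``shift functions'' $\tau_t:\partial\disk\to\RRR$ satisfying $f_t(x)=\AFlow(f_0(x),\tau_t(x))$; these are extended inward by a cut-off supported in a flow-adapted collar of $\partial\disk$ to produce a lift $\hdif_t\in\EApl$ with prescribed initial value. Smoothness of $\AFlow$ and of the cut-off makes the lift continuous in $\Wr{r}$ for $r\geq1$; for $r=0$ one invokes Dold's quasi-fibration machinery to obtain the same long exact sequence. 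This step is the principal obstacle, since the elements of $\EApl$ need not be diffeomorphisms away from $\orig$ and one must be careful that the lift stays inside $\EApl$ (i.e.\ remains orbit-preserving and a local diffeomorphism at $\orig$).

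Finally, the long exact sequence of this fibration reads
$$
\cdots\to\pi_{n+1}(S^1)\to\pi_n(\EAdr{r})\to\pi_n(\EAplr{r})\to\pi_n(S^1)\to\cdots,
$$
and since $\pi_n(S^1)=\ZZZ$ for $n=1$ and vanishes otherwise, we conclude $\pi_n(\EAplr{r},\EAdr{r})\cong\pi_n(S^1)$, which yields the isomorphisms~\eqref{equ:pin_EAd_EApl} for $n\geq2$ and the six-term exact sequence~\eqref{equ:exseq_EAd_EApl} in low degrees. Once the fibration property is in place, the remaining steps are formal.
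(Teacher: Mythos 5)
Your argument is correct in outline but takes a genuinely different route from the paper. The paper introduces no fibration: it deforms a representative $\omega:(I^n,\partial I^n)\to(\EAplr{r},\EAdr{r})$ directly by Proposition~\ref{pr:deform_in_EApl}, whose engine is a shift function $\Lambda$ on $\Ksp\times(\disko)$ for the \emph{whole} deformation (existence and uniqueness over simply connected $\Ksp$ is imported from~\cite{Maks:ImSh} via Lemma~\ref{lm:ext_shift_func_via_homotopy}) multiplied by the cut-off $\nu\circ\func$; triviality of $\pi_n$ for $n\not=1$ then comes from connectedness of $\partial I^n$, while for $n=1$ the two components of $\partial I^1$ leave exactly one integer invariant $\rho_\omega$, defined by $\Lambda_1(\partial\disk)=\rho_\omega\cdot\theta(\partial\disk)$, and the paper shows $\omega\mapsto\rho_\omega$ is an isomorphism onto $\ZZZ$. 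Your restriction map $\rho:\EApl\to E\simeq S^1$ with fiber $\EAd$ packages that same boundary rotation number into $\pi_1(S^1)$, and its homotopy lifting property does hold by exactly the mechanism you sketch: on the single closed orbit $\partial\disk$ the family $\tau_{k,t}$ exists, is unique with $\tau_{k,0}=0$, and is continuous by classical covering theory for the covering $\RRR\to\partial\disk$ defined by the flow --- so you avoid the global shift-function theory on $\disko$ that the paper relies on --- while the collar cut-off vanishing near $\orig$ keeps the lift orbit-preserving and a local diffeomorphism at $\orig$; this is the paper's Lemma~\ref{lm:prop_homot_A} localized at the boundary. Two simplifications you could make: the degree-$(+1)$ claim for $\rho$ follows at once from Lemma~\ref{lm:exist_shift_func}, since a shift function on $\disko$ makes $\hdif|_{\orb}$ flow-homotopic to the identity on every orbit, sparing your continuity-of-degree argument; and Dold's quasi-fibration machinery is unnecessary at $r=0$, because your explicit covering-space lift is already $\Wr{0}$-continuous. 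The trade-off between the two approaches: Proposition~\ref{pr:deform_in_EApl} is shared infrastructure that simultaneously powers Theorems~\ref{th:DA_EA_weak_he} and~\ref{th:EA_non-deg-cond}, so the paper gets the present theorem nearly for free, whereas your fibration --- once the $\Wr{r}$-continuity of the jets of $\tau_{k,t}$ in the lifted family is written out --- is tailored to this one statement but is conceptually cleaner and technically more elementary, confining all lifting issues to $\partial\disk$.
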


Our next aim (see Theorem~\ref{th:EA_non-deg-cond} below) is to obtain some information about the homotopy groups of $\EA$, $\EApl$ and $\EAd$.
First we recall some definitions and preliminary results.

\subsection{Shift map}
Let $\AFlow:\disk\times \RRR \to \disk$ be the flow generated by $\AFld$ and 
$$\ShA:\Ci{\disk}{\RRR}\to \Ci{\disk}{\disk}$$
be the map defined by $$\ShA(\afunc)(z)=\AFlow(z,\afunc(z))$$
for $\afunc\in\Ci{\disk}{\RRR}$ and $z\in\disk$.
We will call $\ShA$ the \myemph{shift map} along orbits of $\AFld$ and denote its image in $\Ci{\disk}{\disk}$ by $\imShA$:
$$
\imShA := \ShA(\Ci{\disk}{\RRR}) \subset \Ci{\disk}{\disk}.
$$

\begin{lemma}
The following inclusions hold true:
\begin{equation}\label{equ:imSh_EidAr}
\imShA \;\subset\;  \EidAr{\infty} \;\subset\; \cdots \;\subset\; \EidAr{1} \;\subset\; \EidAr{0}.
\end{equation}
If $\imShA=\EidAr{r}$ for some $r=0,1,\ldots\infty$, then 
\begin{equation}\label{equ:Didinf_Dir}
\DidAr{\infty} = \cdots = \DidAr{r}
\end{equation}
whence the identity maps $\id:\DAr{\infty}\to\DAr{s}$ and $\id:\EAr{\infty}\to\EAr{s}$ for $s\geq r$ yield bijections:
$$
\pi_0\DAr{\infty} \ \approx \ \cdots \ \approx \ \pi_0\DAr{r}.
\qquad 
\pi_0\EAr{\infty} \ \approx \ \cdots \ \approx \ \pi_0\EAr{r},
$$
\end{lemma}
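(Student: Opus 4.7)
The plan has three parts: the nested inclusion chain, the base inclusion $\imShA\subset\EidAr{\infty}$, and the consequences of the equality $\imShA=\EidAr{r}$.

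First, the inclusions $\EidAr{\infty}\subset\cdots\subset\EidAr{0}$ follow from the observation that the $\Wr{r+1}$ topology on $\EA$ refines $\Wr{r}$, so the identity $\EAr{r+1}\to\EAr{r}$ is continuous; a $\Wr{r+1}$-path is in particular a $\Wr{r}$-path, hence the path component of $\id_{\disk}$ in the finer topology is contained in that in the coarser one. To establish $\imShA\subset\EidAr{\infty}$ I would, given $\afunc\in\Ci{\disk}{\RRR}$, consider the rescaling homotopy
\[
  H:[0,1]\to\Ci{\disk}{\disk},\qquad H(t)(z)=\AFlow\bigl(z,\,t\,\afunc(z)\bigr),
\]
so $H(0)=\id_{\disk}$ and $H(1)=\ShA(\afunc)$. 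Joint smoothness of $\AFlow$ makes $t\mapsto H(t)$ continuous into the $\Wr{\infty}$ topology. Every $H(t)$ is a shift along $\AFld$ and so preserves each orbit; and because $\AFld(\orig)=0$, the Jacobian of $H(t)$ at $\orig$ computes to $\exp\bigl(t\,\afunc(\orig)\,\nabla\AFld\bigr)\in\GLR{2}$ for every $t$, so $H(t)$ is a local diffeomorphism at $\orig$. Thus $H(t)\in\EA$ for all $t$, and this $\Wr{\infty}$-path witnesses $\ShA(\afunc)\in\EidAr{\infty}$.

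Now assume $\imShA=\EidAr{r}$. For every $s\geq r$ the sandwich $\imShA\subset\EidAr{s}\subset\EidAr{r}=\imShA$ forces $\EidAr{s}=\imShA$. For the $\DA$-version, take $\hdif\in\DidAr{r}\subset\EidAr{r}=\imShA$, write $\hdif=\ShA(\afunc)$, and apply the same homotopy $H$; this yields a $\Wr{\infty}$-continuous path from $\id_{\disk}$ to $\hdif$ lying in $\EA$. The main obstacle is verifying that each $H(t)$ is a \emph{global} diffeomorphism of $\disk$, not merely a local one at $\orig$, so that $H$ actually lands in $\DA$. I expect this to follow from the rigidity of the \TC\ orbit structure: an orbit-preserving self-map of $\disk$ fixing $\orig$ and locally diffeomorphic there restricts to a continuous self-map of each simple closed orbit surrounding $\orig$, and a careful analysis (using the first strong integral $\func$ of Definition~\ref{def:first_strong_int} to parametrize orbits smoothly away from $\orig$) should promote this to a global diffeomorphism. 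Granted this, $\DidAr{s}=\DidAr{r}$ for all $s\geq r$. Since $\DA$ is a topological group whose path components are the cosets of $\DidAr{s}$, one has $\pi_0\DAr{s}=\DA/\DidAr{s}=\DA/\DidAr{r}=\pi_0\DAr{r}$; and since by part (3) of Theorem~\ref{th:DA_EA_weak_he} any two path components of $\EAr{s}$ are mutually homeomorphic, the partition of $\EA$ into $\Wr{s}$-path components is determined by $\EidAr{s}$, giving $\pi_0\EAr{\infty}\approx\cdots\approx\pi_0\EAr{r}$.
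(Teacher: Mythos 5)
Your first two steps are sound: the monotonicity of the $\Wr{r}$ topologies gives the chain $\EidAr{\infty}\subset\cdots\subset\EidAr{0}$, and the rescaling homotopy $H(t)=\ShA(t\afunc)$, together with the Jacobian computation $\jt{H(t)}=\exp\bigl(t\,\afunc(\orig)\,\nabla\AFld\bigr)$ (which is correct, since $\AFld(\orig)=0$ kills the term coming from $D\afunc$), shows $H(t)\in\EA$ for all $t$ and hence $\imShA\subset\EidAr{\infty}$. This reconstructs exactly what the paper imports from \cite[Cor.~21]{Maks:Shifts}; the paper's own proof of the lemma is pure citation (to \cite{Maks:Shifts} for the first inclusion and to \cite{Maks:PartitPresDif} for \eqref{equ:Didinf_Dir}), so a from-scratch argument is legitimate here. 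The sandwich $\imShA\subset\EidAr{s}\subset\EidAr{r}=\imShA$ and the coset/translation derivation of the $\pi_0$ bijections are also fine (and not circular, since the proof of Theorem~\ref{th:DA_EA_weak_he} does not use this lemma).

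The genuine gap is the step you flag yourself, and the route you propose to fill it would fail. You ``expect'' that an orbit-preserving self-map of $\disk$ which is a local diffeomorphism at $\orig$ can be promoted to a global diffeomorphism by rigidity of the \TC\ orbit structure; this is false, and its failure is precisely why the paper distinguishes $\EA$ from $\DA$. Indeed, by Lemma~\ref{lm:when_h_in_DApl}, if $\AFld(\afunc)(z_0)=-1$ at some $z_0\neq\orig$ then $\ShA(\afunc)\in\EA\setminus\DA$: its restriction to the orbit through $z_0$ is a degree-one but non-injective self-map of that circle, so no analysis via the first strong integral can make it a diffeomorphism. What actually closes the gap is the same Lemma~\ref{lm:when_h_in_DApl} applied quantitatively to your homotopy: if $\hdif=\ShA(\afunc)\in\DidAr{r}$ is a diffeomorphism, then $\AFld(\afunc)\neq-1$ on all of $\disk$; since $\AFld(\afunc)(\orig)=0$ and $\disk$ is connected, $\AFld(\afunc)>-1$ everywhere, whence $\AFld(t\afunc)=t\,\AFld(\afunc)>-1$ for every $t\in[0,1]$. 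Thus each $H(t)$ is a local diffeomorphism at \emph{every} point; preserving each closed orbit and being homotopic to the identity, it restricts to a degree-one covering, i.e.\ a diffeomorphism, of each orbit, and it fixes $\orig$, so it is bijective and hence a diffeomorphism of $\disk$. With this replacement your path does land in $\DA$ and \eqref{equ:Didinf_Dir} follows; as written, however, the proposal rests at its crucial point on a principle that would collapse $\EA$ to $\DA$ and contradict the setting of the paper.
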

\begin{proof}
The first inclusion in~\eqref{equ:imSh_EidAr} follows from~\cite[Cor.~21]{Maks:Shifts} and the others are evident.
The fact that \eqref{equ:Didinf_Dir} is implied by the assumption $\imShA=\EidAr{r}$ is proved in~\cite{Maks:PartitPresDif}.
\end{proof}

The following Proposition~\ref{pr:info_on_TC} and Example~\ref{ex:homog_poly} describe some results about $\ker\jet$, $\imShA$, and $\EidAr{r}$ for \TC\ vector fields.
Then most complete information are given in the case when $\nabla\AFld$ is non-degenerate and when $\AFld$ is a ``reduced'' Hamiltonian vector field of some homogeneous polynomial on $\RRR^2$.

\begin{proposition}\label{pr:info_on_TC} 
Let $\AFld$ be a \TC\ vector field on $\disk$.

\NFZ\
If $\nabla\AFld=0$, then $\imShA\subset\ker\jet$.

\NFS\
Suppose that $\nabla\AFld$ is degenerate, but is not zero.
Then there are local coordinates at $\orig$ in which 
$\nabla\AFld = \left(\begin{smallmatrix}0 & a \\ 0 & 0 \end{smallmatrix} \right)$ for some $a\in\RRR\setminus\{0\}$.
Define the following subsets of $\GLRpl{2}$:
\begin{equation}\label{equ:Apl_Amn}
\begin{array}{lcl}
\Apl = \left\{ \left(\begin{smallmatrix}1 & d \\ 0 & 1 \end{smallmatrix} \right), \ d \in \RRR \right\},
& \qquad & 
\Amn = \left\{ \left(\begin{smallmatrix}-1 & d \\ 0 & -1 \end{smallmatrix} \right), \ d \in \RRR \right\}, \\ [2mm]
\Aopl = \left\{ \left(\begin{smallmatrix}1 & d \\ 0 & -1 \end{smallmatrix} \right), \ d \in \RRR \right\},
& \qquad & 
\Aomn = \left\{ \left(\begin{smallmatrix}-1 & d \\ 0 & +1 \end{smallmatrix} \right), \ d \in \RRR \right\},
\end{array}
\end{equation}
$$
\Ag =\Apl\cup\Amn \cup \Aopl \cup \Aomn.
$$
Then 
\begin{equation}\label{equ:nF_deg_Sh}
\jet(\imShA)=\Apl,
\quad
\jet(\EApl) \subset \Apl\cup\Amn,
\quad 
\jet(\EA) \subset \Ag.
\end{equation}

\NFN\
If $\nabla\AFld$ is non-degenerate, then there are local coordinates at $\orig$ in which $\AFld$ is given by
\begin{equation}\label{equ:Takens_nf}
\AFld(x,y) = \afunc(x,y) \left(-y \dd{x} + x \dd{y}\right) + \dAx \dd{x} + \dAy \dd{y},
\end{equation}
where $\afunc$ is a $\Cinf$ function such that $\afunc(\orig)\not=0$, and $\dAx$, $\dAy$ are \myemph{flat} at $\orig$.
Moreover,
$$\imShA = \EidAr{\infty}=\cdots = \EidAr{0} = \EApl = \jet^{-1}(\mathrm{SO}(2)),$$
$$\DidAr{\infty} = \cdots = \DidAr{0},$$
the inclusions $\DAd\subset\EAd$ and $\DApl\subset\EApl$ are homotopy equivalences with respect to $\Wr{\infty}$ topologies, $\DAd$ is contractible, and $\DApl$ is homotopy equivalent to the circle.

\medskip 

{\rm(4)}
Let $\theta:\disko\to(0,+\infty)$ be the function associating to each $z\in\disko$ its period $\theta(z)$ with respect to $\AFld$.
Then $\theta$ is $\Cinf$ on $\disko$ and we will call it the \myemph{period function}.

In the cases \NFZ\ and \NFS, i.e. when $\nabla\AFld$ is degenerate, 
$\lim\limits_{z\to\orig}\theta(z) = +\infty$ and thus $\theta$ can not be even continuously extended to all of $\disk$.
On the other hand in the case \NFN\ $\theta$ extends to a $\Cinf$ function on all of $\disk$.
\end{proposition}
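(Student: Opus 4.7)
\emph{Plan of proof.}
I treat the four parts in the order stated. The common computational engine for \NFZ\ and \NFS\ is the formula
$$
\jet(\ShA(\afunc))\;=\;\exp\bigl(\afunc(\orig)\,\nabla\AFld\bigr),
$$
obtained by differentiating $\ShA(\afunc)(z)=\AFlow(z,\afunc(z))$ at $\orig$ and observing that the chain-rule cross term carries the factor $\AFld(\orig)=0$. Part~\NFZ\ is then immediate: if $\nabla\AFld=0$ then $\jet(\ShA(\afunc))=I$ for every $\afunc$, so $\imShA\subset\ker\jet$. Part~\NFN\ is essentially a citation: the normal form~\eqref{equ:Takens_nf} is the theorem of Takens~\cite{Takens:AIF:1973}, and the identifications $\imShA=\EidAr{\infty}=\cdots=\EidAr{0}=\EApl$ together with the stated homotopy types of $\DidAr{r}$, $\DAd$, and $\DApl$ are the main results of~\cite{Maks:LocExtrPer}.

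For part~\NFS\ the plan splits into three steps. First, since $\nabla\AFld$ is nonzero and degenerate while $\AFld$ carries a family of closed orbits shrinking to $\orig$, both eigenvalues of $\nabla\AFld$ must vanish (a nonzero real eigenvalue would force local expansion or contraction incompatible with shrinking closed orbits), so $\nabla\AFld$ is a nonzero nilpotent rank-one matrix and is linearly conjugate to $\left(\begin{smallmatrix}0 & a \\ 0 & 0\end{smallmatrix}\right)$. Second, $(\nabla\AFld)^{2}=0$ collapses the exponential above to $\exp(c\,\nabla\AFld)=I+c\,\nabla\AFld$, and letting $c=\afunc(\orig)$ range over $\RRR$ sweeps out exactly $\Apl$, giving $\jet(\imShA)=\Apl$.

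The main obstacle is the third step: the inclusions $\jet(\EA)\subset\Ag$ and $\jet(\EApl)\subset\Apl\cup\Amn$. For $\hdif\in\EA$, orbit preservation and $\AFld\neq 0$ on $\disko$ yield a smooth function $\mu$ on $\disko$ with $\hdif_{*}\AFld=\mu\AFld$. Writing $J=\jet(\hdif)=\left(\begin{smallmatrix}p & q \\ r & s\end{smallmatrix}\right)$ and comparing leading-order Taylor expansions of both sides of $J(\hdif,z)\AFld(z)=\mu(\hdif(z))\AFld(\hdif(z))$ at $\orig$, parallelism forced by orbit preservation (applied along all directions through $\orig$) gives that $\mu$ has a finite limit $\mu(\orig)$ and that
$$
J\,\nabla\AFld\,J^{-1}\;=\;\mu(\orig)\,\nabla\AFld.
$$
Direct matrix manipulation with the normal form of $\nabla\AFld$ then forces $r=0$ and $p=\mu(\orig)\,s$. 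The subtle point — where I expect the real work to lie — is promoting these two linear relations to $p,s\in\{\pm 1\}$. My plan is to exploit the closed-orbit structure via a first strong integral $\func$: since $\hdif$ preserves every level set of $\func$, one has $\func\circ\hdif=\func$, and matching leading-order terms of $\func$ near $\orig$ (a polynomial $P(x,y)$ whose quasi-homogeneous weight is dictated by the lowest nonlinear terms of $\AFld$, and which serves as a Hamiltonian for the principal part of $\AFld$) under the action of $J=\left(\begin{smallmatrix}p & q \\ 0 & s\end{smallmatrix}\right)$ forces polynomial equations in $p,s$ whose only solutions are $(p,s)\in\{(\pm 1,\pm 1)\}$. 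The four sign combinations give exactly $\Ag$, and the sign of $\det J=ps$ singles out $\Apl\cup\Amn$ in the orientation-preserving case.

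For part~(4), smoothness of $\theta$ on $\disko$ is standard: given $z\in\disko$, choose a $\Cinf$ local transversal $\Sigma$ through $z$; the first-return map to $\Sigma$ and its return-time are $\Cinf$ by the implicit function theorem, and $\theta$ equals this return-time. For the asymptotic behavior, write $\theta(z)=\oint_{\gamma(z)}\|\AFld\|^{-1}\,ds$ over the orbit $\gamma(z)$. In cases \NFZ\ and \NFS, $\|\AFld\|$ vanishes to order $\geq 2$ at $\orig$ along at least one transverse direction (in \NFZ\ along every direction) while the length of $\gamma(z)$ shrinks only linearly in $\|z\|$; an explicit estimate in the respective normal forms (modelled on toy computations such as $\AFld=y\partial_{x}-x^{3}\partial_{y}$, whose period diverges as $c^{-1/4}$ with $c=\func(z)\to 0$) forces $\theta(z)\to+\infty$. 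In case \NFN, substituting~\eqref{equ:Takens_nf} into the return-time integral (using $-y\partial_{x}+x\partial_{y}=\partial/\partial\vartheta$ in polar coordinates) gives $\theta(\orig)=2\pi/|\afunc(\orig)|$ and $\Cinf$ dependence on initial data, so $\theta$ extends smoothly across $\orig$.
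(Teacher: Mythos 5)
Your part \NFZ\ and the computation $\jet(\imShA)=\Apl$ in part \NFS\ are correct: the formula $\jet(\ShA(\afunc))=\exp(\afunc(\orig)\,\nabla\AFld)$ holds because the chain-rule cross term carries the factor $\AFld(\orig)=0$ and $D_z\AFlow_t(\orig)$ solves the variational equation, giving $\exp(t\,\nabla\AFld)$; part \NFN\ as a citation matches the paper exactly. Be aware, though, that the paper proves none of this directly: its entire proof consists of citations --- \NFZ\ to \cite[Lm.~5.3]{Maks:jets}, \NFS\ and (4) to \cite{Maks:LocExtrPer}, \NFN\ to \cite{Takens:AIF:1973} and \cite{Maks:LocExtrPer} --- so your proposal attempts strictly more than the text it replaces, and it must be judged on whether the new arguments actually close.

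The genuine gap is the third step of your part \NFS, i.e.\ precisely the containments $\jet(\EA)\subset\Ag$ and $\jet(\EApl)\subset\Apl\cup\Amn$: your mechanism for promoting $r=0$, $p=\mu(\orig)s$ to $p,s\in\{\pm1\}$ does not exist in general. First, by Definition~\ref{def:first_strong_int} the first strong integral $\func$ is only \emph{continuous} at $\orig$ (smooth merely on $\disko$), so it has no Taylor expansion at $\orig$ and there are no ``leading-order terms of $\func$'' to match. Second, a \TC\ field in case \NFS\ need not have any nonflat terms beyond the linear one, so there is no quasi-homogeneous principal part either: take $\AFld=y\dd{x}-g(x)\dd{y}$ with $g(x)=xe^{-1/x^2}$, the Hamiltonian field of $H(x,y)=y^2/2+\int_0^x g$, whose orbits near $\orig$ are all closed and whose every nonlinear term is flat. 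Here even the available smooth integral $H$ has degenerate leading part $y^2/2$: matching Taylor series of $H\circ\hdif=H$ forces only $s=\pm1$ and constrains $p$ not at all, since $\int_0^x g$ is flat and every $p$ passes every polynomial test. Pinning down $p=\pm1$ in this example already requires non-Taylor input (e.g.\ monotonicity of $\int_0^x g$ along the $x$-axis combined with $\func\circ\hdif=\func$ and the local inverse of $\hdif$), and this is exactly the work the paper delegates to \cite{Maks:LocExtrPer}. A smaller hole in the same step: your derivation of $J\,\nabla\AFld\,J^{-1}=\mu(\orig)\nabla\AFld$ by first-order comparison along directions through $\orig$ silently assumes $s\neq0$; if $s=0$ then both components of $\AFld(\hdif(0,y))$ are $O(y^2)$ and first-order parallelism yields nothing, so that case must be excluded separately.

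Part (4) has the same flaw in milder form: your divergence estimate appeals to ``the respective normal forms'', which do not exist in the degenerate case (the paper itself emphasizes ``the variety of normal forms''). Unlike the $p,s=\pm1$ step, this one is patchable without them: in case \NFZ\ one has $\|\AFld(z)\|\leq C|z|^2$, so the angular speed satisfies $|\dot\vartheta|\leq\|\AFld(z)\|/|z|\leq C|z|$ and one full revolution costs time at least $2\pi/(C\max_{\gamma}|z|)\to+\infty$; in general, if $z_n\to\orig$ with $\theta(z_n)$ bounded, then $|z_n|^{-1}\AFlow_s(z_n)=(I+s\,\nabla\AFld)(z_n/|z_n|)+o(1)$ uniformly on bounded $s$-intervals, and a closed curve uniformly close to a horizontal segment or a point at distance one from $\orig$ cannot wind once around $\orig$. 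Your transversal argument for smoothness of $\theta$ on $\disko$ and the polar-coordinate treatment of case \NFN\ are sound in outline.
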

\begin{proof}
Statement \NFZ\ is a particular case of \cite[Lm.~5.3]{Maks:jets}.
\NFS\ and (4) are established in~\cite{Maks:LocExtrPer}.

\NFN\ Representation~\eqref{equ:Takens_nf} is due to F.~Takens~\cite{Takens:AIF:1973}, and all other statements are proved in~\cite{Maks:LocExtrPer}.
Actually, F.~Takens shown that except for~\eqref{equ:Takens_nf} there is also an infinite series of normal forms for vector fields with ``rotation as $1$-jet'', but the orbits of these vector fields are non-closed, and so they are not \TC.
\end{proof}

\begin{example}\label{ex:homog_poly}
Let $\func:\RRR^2\to\RRR$ be a real homogeneous polynomial in two variables such that $\orig\in\RRR^2$ is a unique critical point of $\func$ being its global minimum.
Then we can write 
\begin{equation}\label{equ:homog_poly}
\func(x,y) = \prod_{j=1}^{k} Q_j^{\beta_j}(x,y),
\end{equation}
where every $Q_j$ is a positive definite quadratic form, $\beta_j \geq 1$, and
$$
\frac{Q_j}{Q_{j'}} \not= \mathrm{const} \ \text{for $j\not=j'$}.
$$
Then it is easy to see that $D= \prod\limits_{j=1}^{k} Q_j^{\beta_j-1}$ is the greatest common divisor of partial derivatives $\func'_{x}$ and $\func'_{y}$.
Let $\BFld= -\func'_{y}\tfrac{\partial}{\partial x} + \func'_{x}\tfrac{\partial}{\partial y}$ be the Hamiltonian vector field of $\func$ and 
$$\AFld= -(\func'_{y}/D)\,\tfrac{\partial}{\partial x} \,+\, (\func'_{x}/D)\,\tfrac{\partial}{\partial y}.$$
Then the coordinate function of $\AFld$ are relatively prime in the ring $\RRR[x,y]$.
We will call $\AFld$ the \myemph{reduced} Hamiltonian vector field for $\func$.

Fix $\eps>0$ and put $\Vman=\func^{-1}[0,\eps]$.
Then $\AFld$ is a \TC\ vector field on $\Vman$ with singularity at $\orig$.

If $k=1$, then $\nabla\AFld$ is non-degenerate and a description of $\pi_0\EAplr{\infty}$ and $\EidAr{\infty}$ is given by \NFN\ of Proposition~\ref{pr:info_on_TC}.

If $k\geq2$, then $\nabla\AFld=0$.
In this case, see \cite{Maks:PartitPresDif,Maks:ImSh},
$$\ker\jet  = \imShA = \EidAr{\infty} = \cdots = \EidAr{1} \not=\EidAr{0}=\EApl,$$ 
$\EidAr{\infty}$ is contractible with respect to $\Wr{\infty}$ topology, and $\pi_0\EAplr{\infty}\approx \ZZZ_{2n}$ for some $n\geq1$.
\end{example}

\medskip

Now we can formulate our last result Theorem~\ref{th:EA_non-deg-cond}.
It gives some information about \myemph{weak} homotopy types of $\EA$ and $\EAd$ under certain assumptions on $\AFld$.
The main assumption is the following one:
\begin{equation}\label{equ:non-deg-condition}
\ker\jet\ \subset\ \imShA.
\end{equation}
It means that for every $\hdif\in\EA$ with \myemph{the identity map as $1$-jet} there exists a $\Cinf$ shift function on all of $\disk$.

\begin{theorem}\label{th:EA_non-deg-cond}
Let $\AFld$ be a \TC\ vector field on $\disk$ such that $\nabla\AFld$ is degenerate and $\ker\jet\subset\imShA$.
Let also $r\geq1$. 
Then the following statements hold true.

\NFZ\
If $\nabla\AFld=0$, then $\imShA = \ker\jet$.
Let $\id\in\GLR{2}$ be the unit matrix.
If in addition the path component of $\id$ in the image $\LA=\jet(\EA)$ of $\jet$ coincides with $\{\id\}$ (e.g. when $\LA$ is discrete), then $\imShA=\EidAr{1}$.

Hence $\jet$ induces isomorphisms 
\begin{equation}\label{equ:pi0_EApl_EA}
\pi_0\EAplr{r} \approx \LA\cap\GLRpl{2},
\qquad 
\pi_0\EAr{r} \approx \LA.
\end{equation}

\medskip 

\NFS\
If $\nabla\AFld =\left(\begin{smallmatrix} 0 & a \\ 0 & 0  \end{smallmatrix}  \right)$ for some $a\not=0$, then 
\begin{equation}\label{equ:imShA_jApl}
\imShA = \EidAr{\infty} = \cdots = \EidAr{1}  = \jet^{-1}(\Apl),
\end{equation}
whence $\jet$ yields a monomorphism, see~\eqref{equ:Apl_Amn},
$$
\pi_0 \EAr{\infty} \ \longrightarrow \  \pi_0\Ag \approx \ZZZ_2 \oplus \ZZZ_2.
$$

\medskip 

{\rm(3)}
The inclusion $\EidAdr{r}\subset\EidAplr{r}$ between the identity path components is a weak homotopy equivalence, whence  from Theorem~\ref{th:EAd_EApl} we have the isomorphisms 
$$\pi_n\EAdr{r} \approx \pi_n\EAplr{r}, \qquad n\geq1,$$
and following exact sequence:
\begin{equation}\label{equ:pi0_EAd_EApl}
 0 \to \ZZZ \to \pi_0\EAdr{r} \to  \pi_0\EAplr{r} \to 0.
\end{equation}

\medskip 

{\rm(4)}
Suppose that the image $\LA$ of $\jet$ is finite.
Then $\pi_0\EAdr{r}\approx\ZZZ$, $\pi_0\EAplr{r}\approx \ZZZ_n$ for some $n\geq0$,
and~\eqref{equ:pi0_EAd_EApl} has the following form
$$
0 \to \ZZZ \xrightarrow{~~\cdot n~~} \ZZZ \xrightarrow{~~\mathrm{mod}\ n~~} \ZZZ_{n}  \to 0.
$$
If $\EA\not=\EApl$, then $\pi_0\EAr{r}\approx \mathbb{D}_n$ is a dihedral group.
\end{theorem}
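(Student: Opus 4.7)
The plan is to combine the hypothesis $\ker\jet \subset \imShA$ with the information on $\jet(\imShA)$ from Proposition~\ref{pr:info_on_TC}, while exploiting throughout that the time-$c$ flow maps $z \mapsto \AFlow(z, c)$ for constants $c \in \RRR$ are genuine diffeomorphisms of $\disk$ lying in $\imShA$ (they equal $\ShA$ applied to the constant function $c$), whose $1$-jets at $\orig$ realize $\exp(c \nabla\AFld)$. These diffeomorphisms will be the main tool for correcting the $1$-jet of an arbitrary element of $\EA$ down to the identity, so as to invoke the hypothesis on $\ker\jet$.

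For \NFZ\ I would first combine Proposition~\ref{pr:info_on_TC}\NFZ\ with the hypothesis to obtain $\imShA = \ker\jet$. If additionally the path component of $\id$ in $\LA$ is $\{\id\}$, any $\hdif \in \EidAr{1}$ has a $\Wr{1}$-path to $\id$ whose image under $\jet$ is a continuous path in $\LA$ forced to stay at $\id$, giving $\hdif \in \ker\jet = \imShA$; combined with the reverse inclusion from~\eqref{equ:imSh_EidAr} this yields $\EidAr{1} = \imShA$, and the isomorphisms~\eqref{equ:pi0_EApl_EA} follow since $\jet$ is a surjective semigroup homomorphism with $\jetinv(\id) = \imShA = \EidAr{r}$. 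For \NFS, the inclusion $\imShA \subset \jet^{-1}(\Apl)$ is half of Proposition~\ref{pr:info_on_TC}\NFS; for the converse---the crux of the theorem---given $\hdif$ with $\jet(\hdif) = \left(\begin{smallmatrix}1 & d_0 \\ 0 & 1\end{smallmatrix}\right)$, I would set $c = d_0/a$ so that $\ShA(c)$ has the same $1$-jet as $\hdif$. Its inverse $\ShA(-c)$ is likewise a diffeomorphism in $\imShA$, and $\ShA(-c) \circ \hdif \in \ker\jet \subset \imShA$; writing $\ShA(-c) \circ \hdif = \ShA(\eta)$ for some $\eta \in \Ci{\disk}{\RRR}$ yields $\hdif = \ShA(c) \circ \ShA(\eta) = \ShA(c + \eta) \in \imShA$, where the last equality uses that $c$ is constant. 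The chain~\eqref{equ:imShA_jApl} then follows by combining this with~\eqref{equ:imSh_EidAr} and the inclusion $\EidAr{1} \subset \jet^{-1}(\Apl)$ (because $\Apl$ is the path component of $\id$ in $\Ag \supset \LA$), and the monomorphism into $\pi_0 \Ag \approx \ZZZ_2 \oplus \ZZZ_2$ is then formal.

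For (3) I would invoke Theorem~\ref{th:EAd_EApl}, which already supplies the isomorphisms for $n \geq 2$ and the exact sequence, reducing the weak-equivalence claim to injectivity of the connecting morphism $\ZZZ \to \pi_0 \EAdr{r}$. A generator of $\ZZZ$ is realized by the path $t \mapsto \ShA(t)$, $t \in [0, T_0]$, where $T_0$ is the period of the boundary orbit $\partial\disk$: this path lies in $\imShA \subset \EidAplr{r}$, ends at $\ShA(T_0) \in \EAd$ (since the time-$T_0$ flow fixes $\partial\disk$ pointwise), and its boundary class is $[\ShA(T_0)] \in \pi_0 \EAdr{r}$. I would then argue that the iterates $[\ShA(k T_0)]$, $k \in \ZZZ$, are pairwise distinct in $\pi_0 \EAdr{r}$ by using Proposition~\ref{pr:info_on_TC}(4): since $\theta(z) \to \infty$ as $z \to \orig$, any hypothetical isotopy in $\EAd$ between $\ShA(k T_0)$ and $\ShA(k' T_0)$ would produce, via continuous local shift functions near $\orig$, a deformation between the values $k T_0$ and $k' T_0$ modulo a diverging period, which is impossible when $k \neq k'$.

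Finally, for (4), finiteness of $\LA$ combined with parts (1) and (2) gives a finite $\pi_0 \EAplr{r}$ of some order $n$, so~\eqref{equ:pi0_EAd_EApl} becomes $0 \to \ZZZ \to \pi_0 \EAdr{r} \to \ZZZ_n \to 0$. Identifying a generator of $\pi_0 \EAdr{r}$ as $[\ShA(T_0/n)]$, whose $n$-th power is the generator $[\ShA(T_0)]$ of the $\ZZZ$ subgroup, shows the extension is $\ZZZ \xrightarrow{\cdot n} \ZZZ \to \ZZZ_n$, hence $\pi_0 \EAdr{r} \approx \ZZZ$. When $\EA \neq \EApl$, any $\hdif \in \EA \setminus \EApl$ acts by conjugation on $\pi_0 \EAplr{r} \approx \ZZZ_n$ as an involution; by Theorem~\ref{th:DA_EA_weak_he}(3) and a direct calculation this involution is inversion, producing $\pi_0 \EAr{r} \approx \DDD_n$. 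The principal obstacles are the reverse inclusion in \NFS, where the hypothesis is essentially used in combination with the invertibility of flow maps to produce smooth shift functions on all of $\disk$, and the injectivity of the connecting morphism in (3), which demands a careful analysis of shift functions near the degenerate singularity $\orig$ where the period blows up.
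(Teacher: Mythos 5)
Your parts \NFZ\ and \NFS\ are correct and essentially identical to the paper's own proof: the crux of \NFS\ --- correcting $\hdif\in\jetinv(\Apl)$ by a constant-time flow map so that $\AFlow_{-\tau}\circ\hdif\in\ker\jet\subset\imShA$, then recombining shift functions (Lemma~\ref{lm:shift-func-for-frac}) to get $\hdif=\ShA(\tau+\eta)$ --- is exactly the paper's argument, as is the open-closed argument giving $\imShA\subset\EidAr{1}\subset\jetinv(\Apl)$. The genuine gap is in your (3). Your reduction to injectivity of the connecting map $\ZZZ\to\pi_0\EAdr{r}$ is fine (by exactness it is equivalent to the paper's reduction, surjectivity of $i_1:\pi_1\EAdr{r}\to\pi_1\EAplr{r}$), but your proof that the classes $[\ShA(kT_0)]$ are pairwise distinct does not close. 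Given a hypothetical path $H_t$ in $\EAdr{r}$ from $\AFlow_{kT_0}$ to $\AFlow_{k'T_0}$, Lemma~\ref{lm:ext_shift_func_via_homotopy} extends the constant shift function $kT_0$ of $H_0$ to a shift function $\Lambda$ for the whole deformation on $\disko$; since each $H_t$ is fixed on $\partial\disk$, discreteness forces $\Lambda_t(\partial\disk)\equiv kT_0$, and uniqueness modulo $\theta$ (Lemma~\ref{lm:exist_shift_func}) then gives $\Lambda_1=k'T_0+(k-k')\theta$. This is \emph{not} a contradiction: it is a perfectly legitimate $\Cinf$ shift function on $\disko$, merely unbounded near $\orig$, and nothing in your sketch forces $\Lambda_1$ to be bounded or to extend to $\orig$. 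The only tool in the paper that forces extendability, Lemma~\ref{lm:ext_shift_func}, requires \emph{all} maps of the deformation to lie in $\ker\jet\subset\imShA$, which the intermediate maps $H_t\in\EAd$ need not do (in case \NFS\ even the endpoints $\AFlow_{kT_0}$ have nontrivial $1$-jet). Tellingly, your (3) never invokes the standing hypothesis $\ker\jet\subset\imShA$, which the paper uses essentially: it takes a \emph{loop at} $\id_{\disk}$ in $\EAplr{r}$ (so that its image lies in $\EidAplr{r}=\imShA$ by \NFZ, \NFS), deforms it rel $\partial I$ into $\ker\jet$ by Lemma~\ref{lm:deform_in_imShA}, obtains unique global shift functions via Lemmas~\ref{lm:exist_shift_func} and~\ref{lm:ext_shift_func}, and concludes with the cut-off homotopy~\eqref{equ:homotopy_to_diffeo}, which is rel $\partial I$ by (d) of Lemma~\ref{lm:prop_homot_A}.

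Your (4) contains a concrete error: $\ShA(T_0/n)=\AFlow_{T_0/n}$ rotates $\partial\disk$ by $1/n$ of its period and is \emph{not} fixed on $\partial\disk$ for $n\geq2$, so $[\ShA(T_0/n)]$ is not an element of $\pi_0\EAdr{r}$ at all. Worse, no constant-time flow map can play this role: every $\ShA(c)$ lies in $\imShA\subset\EidAplr{r}$ and therefore projects to the \emph{trivial} class of $\pi_0\EAplr{r}\approx\ZZZ_n$, whereas a generator of $\pi_0\EAdr{r}\approx\ZZZ$ must project to a generator of $\ZZZ_n$. The paper instead deduces (4) from \NFZ--(3) together with the fact that every finite subgroup of $\GLRpl{2}$ is cyclic (and a finite subgroup of $\GLR{2}$ meeting the orientation-reversing component is dihedral, whence $\pi_0\EAr{r}\approx\DDD_n$); in particular, identifying the extension~\eqref{equ:pi0_EAd_EApl} with $\ZZZ\xrightarrow{\cdot n}\ZZZ\to\ZZZ_n$ requires an argument that your proposal does not supply.
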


The proof of Theorems~\ref{th:DA_EA_weak_he}, \ref{th:EAd_EApl}, and \ref{th:EA_non-deg-cond} will be given in \S\S\ref{sect:proof:th:DA_EA_weak_he}-\ref{sect:proof:th:EA_non-deg-cond}.
All of them are based on results of~\cite{Maks:ImSh} described in~\S\ref{sect:sh_func_Kr_def} about existence and uniqueness of shift functions for deformations in $\EApl$.

\section{The inclusion $\DA\subset\EA$}\label{sect:DA_EA}
The aim of this section is to prove Lemma~\ref{lm:change_to_diff} which allows to change elements of $\EA$ outside some neighbourhood of $\orig$ to produce diffeomorphisms.

Let $\AFld$ be a \TC\ vector field of $\disk$ and $\func:\disk\to[0,1]$ be a first strong intergal for $\AFld$, see~Definition~\ref{def:first_strong_int}.
For every $c\in(0,1]$ put $\Uman_{c}=\func^{-1}[0,c]$.
Then $\Uman_{c}$ is invariant with respect to $\AFld$.

\begin{lemma}\label{lm:change_to_diff}
Let $\hdif\in\EA$.
Then there exists $\gdif\in\DA$ such that $\hdif=\gdif$ on some neighbourhood of $\orig$.
\end{lemma}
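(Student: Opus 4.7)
Since $h\in\EA$ is a local diffeomorphism at $\orig$ and preserves orbits (so $h(\orig)=\orig$), I will first fix $c_0\in(0,1]$ small enough that $h|_{\Uman_{c_0}}$ is a diffeomorphism of $\Uman_{c_0}$ onto itself (the image is $\Uman_{c_0}$ because each orbit is fixed setwise), and pick $0<c_2<c_1<c_0$. The plan is to make $g$ agree with $h$ on a neighbourhood of $\Uman_{c_2}$, to make $g$ equal the identity outside $\Uman_{c_1}$, and on the intermediate annulus to interpolate along orbits using the flow $\AFlow$ and a suitable cut-off of the shift function of $h$.

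Write $N=\Uman_{c_0}\setminus\{\orig\}$; on $N$ the vector field $\AFld$ is nowhere zero, so the implicit function theorem locally produces a smooth $\afunc$ with $h(w)=\AFlow(w,\afunc(w))$. Assuming first that $\det\jet(h)>0$ (so $h$ has degree $+1$ on each orbit of $N$), I would glue these local shift functions to a global smooth $\afunc\colon N\to\RRR$; any $\ZZZ$-valued winding monodromy around the generator of $\pi_1(N)\cong\ZZZ$ would be removed in advance by pre-composing $h$ with a suitable orbit-preserving self-diffeomorphism of $\Uman_{c_0}$ of the form $\AFlow(\cdot,k\lambda(\cdot))$, with $\lambda$ smooth and supported in $N$. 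The orientation-reversing case reduces to this one after pre-composition with a fixed orientation-reversing element of $\DA$, which can itself be produced by the same cut-off procedure applied to any given orientation-reversing element of $\EA$. Next I choose a smooth cut-off $\mu\colon[0,1]\to[0,1]$ with $\mu\equiv 1$ on $[0,c_2+\delta]$ and $\mu\equiv 0$ on $[c_1-\delta,1]$ for some small $\delta>0$, set $\bfunc=\mu(\func)\afunc$ on $N$, and define
\begin{equation*}
g(z)=
\begin{cases}
h(z), & z\in \Uman_{c_2+\delta},\\
\AFlow(z,\bfunc(z)), & z\in \Uman_{c_1}\setminus\interior \Uman_{c_2},\\
z, & z\in\disk\setminus \Uman_{c_1-\delta}.
\end{cases}
\end{equation*}
The three formulas coincide on the overlaps (because $\mu\equiv 1$ forces $\AFlow(z,\bfunc(z))=\AFlow(z,\afunc(z))=h(z)$, and $\mu\equiv 0$ forces $\AFlow(z,\bfunc(z))=z$), so $g$ is smooth on $\disk$, equals $h$ on $\Uman_{c_2}$, and preserves orbits by its very construction from the flow.

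The remaining point is to verify that $g$ is a diffeomorphism, which requires attention only on the intermediate annulus. The matrix determinant lemma gives $\det Dg(z)=\det D_z\AFlow(z,\bfunc(z))\cdot\bigl(1+(d\bfunc\cdot\AFld)(z)\bigr)$, with the first factor always nonzero as the differential of the time-$\bfunc(z)$ flow. Since $\func$ is a first integral, $d\func\cdot\AFld\equiv 0$, whence $d\bfunc\cdot\AFld=\mu(\func)(d\afunc\cdot\AFld)$; applying the same identity to $h$ (a diffeomorphism with $\det Dh>0$ on $\Uman_{c_0}$) yields $1+d\afunc\cdot\AFld>0$ on $N$, and since $\mu\in[0,1]$ one gets $1+\mu(\func)(d\afunc\cdot\AFld)>0$ as well, so $g$ is everywhere a local diffeomorphism. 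A degree argument on each orbit (via the straight-line homotopy $s\mapsto\AFlow(\cdot,s\bfunc)$ from $\id$ at $s=0$ to $g$ at $s=1$) then upgrades this to a diffeomorphism of the annulus, hence of $\disk$, giving $g\in\DA$. The main anticipated obstacle lies in the second step, namely the global existence of $\afunc$ on $N$ (handling winding monodromy) and the clean reduction of the orientation-reversing case; both should be tractable using the shift-function machinery of \cite{Maks:ImSh,Maks:PartitPresDif}.
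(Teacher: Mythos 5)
Your construction works, and is genuinely different from the paper's, in the orientation-preserving case: the paper does not cut off a shift function at all, but instead takes $\eps$ with $\hdif:\Uman_{2\eps}\to\Uman_{2\eps}$ a diffeomorphism, builds (from the gradient flow of the first strong integral $\func$) a diffeomorphism $\psi:\Uman_{2\eps}\to\disk$ fixed on $\Uman_{\eps}$ with $\func\circ\psi=\mu\circ\func$ for a diffeomorphism $\mu:[0,2\eps]\to[0,1]$, and sets $\gdif=\psi\circ\hdif|_{\Uman_{2\eps}}\circ\psi^{-1}$. That conjugation trick needs no shift functions, no monodromy discussion, no Jacobian estimate, and --- crucially --- no orientation hypothesis. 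Your interpolation $\AFlow(z,\mu(\func(z))\afunc(z))$ together with the identity $\det D\gdif=\det D\AFlow\cdot(1+\AFld(\bfunc))$ is correct as far as it goes (it matches Lemma~\ref{lm:when_h_in_DApl}), and your worry about global existence of $\afunc$ on $N$ is in fact vacuous: by Lemma~\ref{lm:exist_shift_func} every $\hdif\in\EApl$ has a global $\Cinf$ shift function on $\disko$, obtained by lifting to the $\ZZZ$-covering $P:\tdisko\to\disko$, where the lifted orbits are non-closed and the lifted shift function is automatically $\ZZZ$-invariant. Note, however, that your proposed device for killing monodromy is a no-op: pre-composing $\hdif$ with $\ShA(k\lambda)$ for a single-valued smooth $\lambda$ changes any local branch of the shift function by a single-valued smooth summand and therefore does not alter the monodromy class at all; it is the smoothness and local invertibility of $\hdif$ at $\orig$, via the covering argument, that forces the monodromy to vanish.

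The genuine gap is the orientation-reversing case, where your construction cannot be repaired from within: an orientation-reversing $\hdif\in\EA$ reverses the orientation of every orbit, so it admits no shift function whatsoever (a map of the form $\AFlow(z,\afunc(z))$ preserves the direction of motion on each orbit), and a diffeomorphism of $\disk$ equal to the identity near $\partial\disk$ is necessarily orientation-preserving, so the target $\gdif$ of your cut-off scheme can never agree near $\orig$ with an orientation-reversing $\hdif$. Your proposed reduction is moreover circular: producing ``a fixed orientation-reversing element of $\DA$'' from an orientation-reversing element of $\EA$ is precisely an instance of the lemma you are proving, and you justify it by ``the same cut-off procedure,'' which, as just noted, does not apply to orientation-reversing maps. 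The reduction itself is sound --- given any orientation-reversing $\gdif_0\in\DA$, the map $\gdif_0^{-1}\circ\hdif$ lies in $\EApl$ and your argument finishes the job --- but the existence of such a $\gdif_0$ requires an independent construction, and the paper's conjugation by $\psi$ supplies exactly this, uniformly in $\hdif$, with no case distinction. So as written your proof establishes the lemma only for $\hdif\in\EApl$.
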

\begin{proof}
By definition $\hdif\in\EA$ is a diffeomorphism at $\orig$, whence there exists $\eps\in(0,1/2)$ such that $\hdif:\Uman_{2\eps} \to \Uman_{2\eps}$ is a diffeomorphism.
Fix any $C^{\infty}$-diffeomorphism $\mu:[0,2\eps]\to[0,1]$ such that $\mu=\id$ on $[0,\eps]$, see Figure~\ref{fig:func_mu}.
\begin{figure}[ht]
\includegraphics[height=1.5cm]{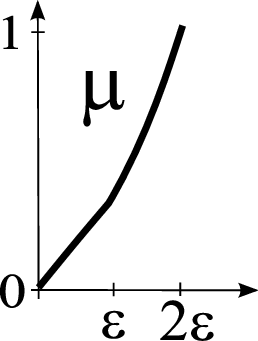}
\caption{}
\protect\label{fig:func_mu}
\end{figure}

We will now construct a diffeomorphism $\psi:\Uman_{2\eps}\to\disk$ fixed on $\Uman_{\eps}$ and such that $\func \circ \psi = \mu \circ \func$, i.e. it makes commutative the following diagram:
$$
\begin{CD}
\Uman_{2\eps} @>{\psi}>> \disk \\
@V{\func}VV @VV{\func}V \\
[0,2\eps] @>{\mu}>> [0,1]
\end{CD}
$$
It follows that if $c\in[0,2\eps]$ and $\orb=\func^{-1}(c)$ is an orbit of $\AFld$, then $\psi(\orb) = \func^{-1}(\mu(c))$ is also an orbit of $\AFld$.
Then we can define a diffeomorphism $\gdif:\disk\to\disk$ by 
$$
\begin{CD}
\gdif = \psi\circ\hdif|_{\Uman_{2\eps}}\circ\psi^{-1}:
\disk @>{\psi^{-1}}>> \Uman_{2\eps} @>{\hdif}>>  \Uman_{2\eps} @>{\psi}>> \disk.
\end{CD}
$$
Then $\gdif\in\DA$ and since $\psi$ is fixed on $\Uman_{\eps}$, it follows that $\gdif=\hdif$ on $\Uman_{\eps}$.

Construction of $\psi$ is similar to~\cite[Lm.~5.1.3]{Maks:BSM:2006}.
Consider the gradient vector field $\grad\func$ of $\func$ defined on $\disko$, and let $(\Phi_t)$ be the local flow of $\grad\func$.
Let $z\in\Uman_{2\eps}$ and $\gamma$ be the orbit of $z$ with respect to $\Phi$.
Then $\gamma$ intersect the level-set $\func^{-1}(\mu(\func(z)))$ at a unique point $\psi(z)$, see Figure~\ref{fig:psi_constr}.
\begin{figure}[ht]
\includegraphics[height=3cm]{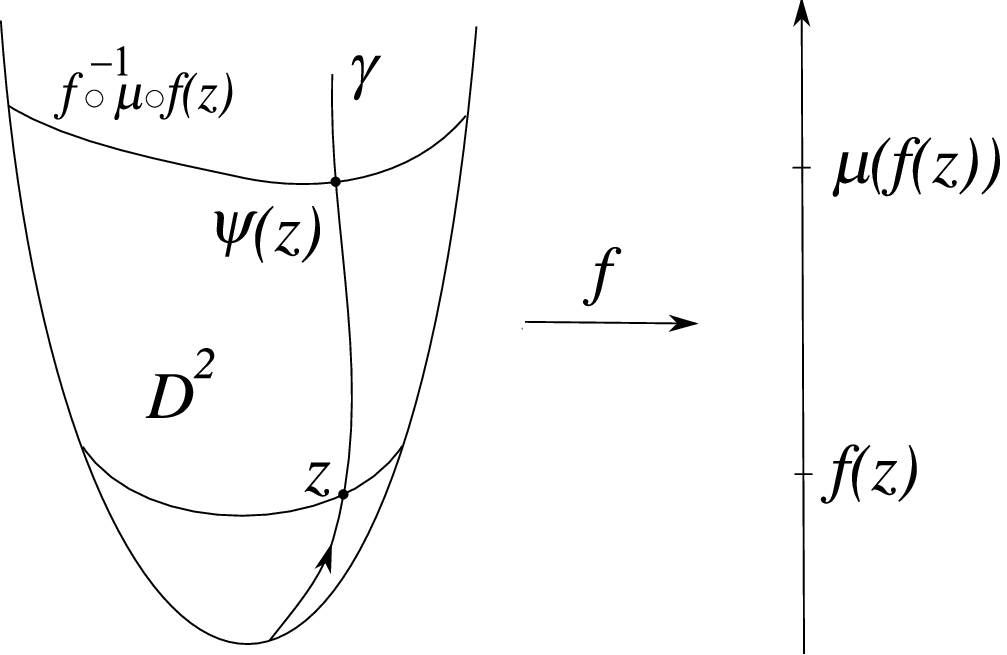}
\caption{}
\protect\label{fig:psi_constr}
\end{figure}
Similarly to~\cite{Maks:BSM:2006} it can be shown that the correspondence $z\mapsto\psi(z)$ is a diffeomorphism of $\Uman_{2\eps}\to\disk$ if and only if so is $\mu$.
\end{proof}

\section{Shift functions}\label{sect:shift-func}
Let $\Mman$ be a smooth ($\Cinf$) manifold, $\AFld$ be a $\Cinf$ vector field on $\Mman$ generating a flow $\AFlow:\Mman\times\RRR\to\Mman$, and 
$\ShA:\Ci{\Mman}{\RRR}\to\Ci{\Mman}{\Mman}$ be the \myemph{shift map along orbits of $\AFld$} defined by $\ShA(\afunc)(z)=\AFlow(z,\afunc(z))$.

If a subset $\Vman\subset\Mman$, a function $\afunc:\Vman\to\RRR$, and a map $\hdif:\Mman\to\Mman$ are such that $\hdif(z)=\AFlow(z,\afunc(z))$, then we will say that $\afunc$ is \myemph{shift function} for $\hdif$ on $\Vman$, and that the restriction $\hdif|_{\Vman}$ is in turn a \myemph{shift along orbits of $\AFld$ via $\afunc$}.

For a $\Cinf$ function $\afunc:\Mman\to\RRR$ we will denote by $\AFld(\afunc)$ the Lie derivative of $\afunc$ along $\AFld$.

\begin{lemma}\label{lm:when_h_in_DApl}{\rm\cite[Th.~19]{Maks:Shifts}}
Let $\Vman\subset\Mman$ be an open subset, $\afunc:\Vman\to\RRR$ a $\Cinf$ function, and $\hdif:\Vman\to\Mman$ be a map defined by $\hdif(z)=\AFlow(z,\afunc(z))$.
Then $\hdif$ is a local diffeomorphism at some $z_0\in\Mman$ if and only if $\AFld(\afunc)(z_0)\not=-1$.
\end{lemma}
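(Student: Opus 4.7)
The plan is to verify this pointwise criterion by a direct chain-rule computation combined with the flow-box theorem, splitting into the singular and regular cases for $\AFld$ at $z_0$. (The identical statement is already attributed to \cite[Th.~19]{Maks:Shifts}; the sketch below simply reconstructs a short proof.) Set $t_0 := \afunc(z_0)$ and write $\AFlow_{t} := \AFlow(\cdot,t) : \Mman \to \Mman$ for the time-$t$ flow map, which is a diffeomorphism.

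First, I would differentiate the composition $z \mapsto (z,\afunc(z)) \mapsto \AFlow(z,\afunc(z))$ by the chain rule. For $v \in T_{z_0}\Mman$ this gives
\[
d\hdif_{z_0}(v) \;=\; (d\AFlow_{t_0})_{z_0}(v) \;+\; d\afunc_{z_0}(v)\cdot \AFld(\hdif(z_0)),
\]
using that the derivative of $\AFlow(z,\cdot)$ in its second argument at time $t_0$ equals $\AFld$ evaluated at $\AFlow(z,t_0)$.

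If $\AFld(z_0)=0$, then $z_0$ is a fixed point of the flow, $\hdif(z_0)=z_0$, and the second summand vanishes; hence $d\hdif_{z_0}=(d\AFlow_{t_0})_{z_0}$ is a linear isomorphism. On the other hand $\AFld(\afunc)(z_0)=d\afunc_{z_0}(\AFld(z_0))=0\neq -1$, so both sides of the asserted equivalence hold and the statement is trivial in this case.

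The main case is $\AFld(z_0)\neq 0$. Here I would invoke the flow-box theorem and choose local coordinates $(x_1,\ldots,x_n)$ near $z_0$ in which $\AFld=\dd{x_1}$, so $\AFlow((x_1,\ldots,x_n),t)=(x_1+t,x_2,\ldots,x_n)$. In these coordinates $\hdif(x)=(x_1+\afunc(x),x_2,\ldots,x_n)$, whose Jacobian matrix is upper triangular with $n-1$ diagonal ones and top-left entry $1+\ddd{\afunc}{x_1}$, so
\[
\det d\hdif_{z_0} \;=\; 1 + \ddd{\afunc}{x_1}(z_0) \;=\; 1 + \AFld(\afunc)(z_0).
\]
By the inverse function theorem $\hdif$ is a local diffeomorphism at $z_0$ if and only if this determinant is nonzero, i.e.\ $\AFld(\afunc)(z_0)\neq -1$. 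The principal ingredient is the flow-box reduction; once it is applied the rest is a one-line determinant computation, and the only point needing attention is the separate treatment of singular points, which is why I handle that case first.
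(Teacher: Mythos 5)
Your chain-rule formula and your treatment of the singular case are correct, and the flow-box reduction is indeed the natural route (note that the paper itself gives no argument here: it imports the statement verbatim from \cite[Th.~19]{Maks:Shifts}, so there is no internal proof to compare against). However, your regular case contains a genuine gap: in a flow-box chart around $z_0$ the identity $\AFlow((x_1,\dots,x_n),t)=(x_1+t,x_2,\dots,x_n)$ is valid only for those $t$ for which the whole orbit segment stays inside the chart, whereas $t_0=\afunc(z_0)$ may be arbitrarily large. If, say, $z_0$ lies on a closed orbit and $\afunc(z_0)$ is a nonzero multiple of its period, then $\hdif(z_0)=z_0$, yet your coordinate expression $\hdif(x)=(x_1+\afunc(x),x_2,\dots,x_n)$ is false, so the determinant computation is unjustified exactly there. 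This is not a hypothetical corner case in this paper: the lemma is invoked in the proof of (c) of Lemma~\ref{lm:prop_homot_A}, via~\eqref{equ:F_L_ge_1}, for shift functions $\Lambda_k$ whose values on $\partial\disk$ are integer multiples $n_k\,\theta(\partial\disk)$ of the period, i.e.\ precisely the large-time situation.

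The repair is short, and you already have the ingredients on the page. Since $\AFlow_{t_0}$ is a diffeomorphism, factor $\hdif=\AFlow_{t_0}\circ\gdif$ with $\gdif(z)=\AFlow(z,\afunc(z)-t_0)$; then $\hdif$ is a local diffeomorphism at $z_0$ iff $\gdif$ is, the new shift function $\afunc-t_0$ vanishes at $z_0$, and $\AFld(\afunc-t_0)=\AFld(\afunc)$, so one may assume $\afunc(z_0)=0$, after which the flow-box computation is legitimate on a sufficiently small neighbourhood of $z_0$. Alternatively you can bypass the flow box altogether: combining your displayed chain rule with the flow-invariance identity $(d\AFlow_{t_0})_{z_0}\AFld(z_0)=\AFld(\hdif(z_0))$ gives $d\hdif_{z_0}=(d\AFlow_{t_0})_{z_0}\circ L$, where $L(v)=v+d\afunc_{z_0}(v)\,\AFld(z_0)$, and the rank-one determinant formula yields $\det L=1+d\afunc_{z_0}(\AFld(z_0))=1+\AFld(\afunc)(z_0)$; this treats singular and regular points uniformly, and the stated equivalence then follows from the inverse function theorem.
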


\begin{lemma}\label{lm:shift-func-for-frac}{\rm\cite{Maks:Shifts}}
Let $\afunc_{\gdif},\afunc_{\hdif}, \afunc_{\kdif}:\Mman\to\RRR$ be $\Cinf$ functions and 
$$
\gdif=\ShA(\afunc_{\gdif}),
\qquad 
\hdif=\ShA(\afunc_{\hdif}),
\qquad 
\kdif=\ShA(\afunc_{\kdif})
$$
be the corresponding shifts.
Suppose also that $\kdif$ is a diffeomorphism.
Then the following functions 
$$
\afunc_{\gdif\circ\hdif} = \afunc_{\gdif}\circ\hdif + \afunc_{\hdif},
\qquad
\afunc_{\kdif^{-1}} =-\afunc_{\kdif}\circ\kdif^{-1},
$$
$$\afunc_{\gdif\circ\kdif^{-1}}=(\afunc_{\gdif}-\afunc_{\kdif})\circ\kdif^{-1}$$ 
are $\Cinf$ shift functions for $\gdif\circ\hdif$, $\kdif^{-1}$, and $\gdif\circ\kdif^{-1}$ respectively.
\end{lemma}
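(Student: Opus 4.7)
The key observation is that shift functions compose via the fundamental group law of the flow: $\AFlow(\AFlow(z,s),t)=\AFlow(z,s+t)$, valid on any open set where both sides are defined. Since the claims of the lemma are pointwise identities between $\Cinf$ maps, the plan is to verify each formula by a direct flow computation, then confirm that the resulting shift function is $\Cinf$ on all of $\Mman$.

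First I would handle $\gdif\circ\hdif$. Starting from the definitions
$$
(\gdif\circ\hdif)(z)=\AFlow\bigl(\hdif(z),\afunc_{\gdif}(\hdif(z))\bigr)=\AFlow\bigl(\AFlow(z,\afunc_{\hdif}(z)),\,\afunc_{\gdif}(\hdif(z))\bigr),
$$
the group law immediately rewrites this as $\AFlow\bigl(z,\afunc_{\hdif}(z)+\afunc_{\gdif}(\hdif(z))\bigr)$, which is exactly the shift along $\AFld$ via $\afunc_{\gdif}\circ\hdif+\afunc_{\hdif}$. This function is $\Cinf$ because $\hdif$ is $\Cinf$ (being the shift by the $\Cinf$ function $\afunc_{\hdif}$) and the sum and composition of $\Cinf$ maps is $\Cinf$.

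Next I would treat $\kdif^{-1}$. Since $\kdif$ is assumed to be a diffeomorphism, $\kdif^{-1}$ is $\Cinf$ and the candidate $-\afunc_{\kdif}\circ\kdif^{-1}$ is a $\Cinf$ function on $\Mman$. To verify it is a shift function, set $w=\kdif^{-1}(z)$, so $z=\AFlow(w,\afunc_{\kdif}(w))$. Then
$$
\AFlow\bigl(z,\,-\afunc_{\kdif}(\kdif^{-1}(z))\bigr)
=\AFlow\bigl(\AFlow(w,\afunc_{\kdif}(w)),\,-\afunc_{\kdif}(w)\bigr)
=\AFlow(w,0)=w=\kdif^{-1}(z),
$$
using the group law and $\AFlow(\cdot,0)=\id$. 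This is the desired identity.

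Finally, for $\gdif\circ\kdif^{-1}$, I would apply the composition formula already established, with $\hdif$ replaced by $\kdif^{-1}$ (whose shift function $\afunc_{\kdif^{-1}}=-\afunc_{\kdif}\circ\kdif^{-1}$ has just been produced):
$$
\afunc_{\gdif\circ\kdif^{-1}}=\afunc_{\gdif}\circ\kdif^{-1}+\afunc_{\kdif^{-1}}=\afunc_{\gdif}\circ\kdif^{-1}-\afunc_{\kdif}\circ\kdif^{-1}=(\afunc_{\gdif}-\afunc_{\kdif})\circ\kdif^{-1}.
$$
No step is a serious obstacle; the only point requiring a moment of care is that $\kdif^{-1}$ must be $\Cinf$ for the composition $\afunc_{\kdif}\circ\kdif^{-1}$ to be smooth, which is exactly why the hypothesis ``$\kdif$ is a diffeomorphism'' is included. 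The underlying mechanism throughout is just the one-parameter group property of $\AFlow$.
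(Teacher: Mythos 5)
Your proof is correct and takes essentially the same route as the paper: the paper merely cites the formulas for $\afunc_{\gdif\circ\hdif}$ and $\afunc_{\kdif^{-1}}$ from \cite{Maks:Shifts} (Eq.~(8),(9) there) and then, exactly as you do, derives $\afunc_{\gdif\circ\kdif^{-1}}=(\afunc_{\gdif}-\afunc_{\kdif})\circ\kdif^{-1}$ by combining them. Your direct verification via the group law $\AFlow(\AFlow(z,s),t)=\AFlow(z,s+t)$, together with the observation that the diffeomorphism hypothesis on $\kdif$ is what guarantees smoothness of $\afunc_{\kdif}\circ\kdif^{-1}$, simply makes the cited facts self-contained.
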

\begin{proof}
Formulae for $\afunc_{\gdif\circ\hdif}$ and $\afunc_{\kdif^{-1}}$ coincide with~\cite[Eq.~(8),(9)]{Maks:Shifts}.
They also imply formula for $\afunc_{\gdif\circ\kdif^{-1}}$.
\end{proof}

\section{Shift functions for $\EApl$}\label{sect:shift_func_EApl}
Let $\tdisk=\{(\phi,\rrho)\in\RRR^2 \ : \ 0\leq \rrho \leq 1 \}$ be a closed strip,
$$\tdisko=\{(\phi,\rrho)\in\RRR^2 \ : \ 0< \rrho \leq 1 \}=\tdisk\setminus\{\rrho=0\}$$ be a half-closed strip in $\RRR^2$, and $P:\tdisk\to\disk$ be the map given by $$P(\rrho,\phi)=(\rrho\cos\phi,\rrho\sin\phi).$$

Then $P(\tdisko)=\disko$ and the restriction $P:\tdisko\to\disko$ is $\ZZZ$-covering map such that the corresponding group of covering transformations is generated by the following map 
$$\eta:\tdisk\to\tdisk, \qquad \eta(\phi,\rrho)=(\phi+2\pi,\rrho).$$

It follows that every $\Cinf$ map $\hdif:\disko\to\disko$ lifts to a $P$-equivariant (i.e. commuting with $\eta$) map $\thdif:\tdisko\to\tdisko$ such that 
$$P\circ\thdif = \hdif\circ P.$$
Such $\thdif$ is not unique and can be replaced with $\thdif\circ\eta^n=\eta^{n}\circ\thdif$ for any $n\in\ZZZ$.

\begin{remark}
It is well-known that if $\hdif:\disk\to\disk$ is a $\Cinf$ map being a local diffeomorphism at $\orig$, and such that $\hdif^{-1}(\orig)=\orig$, then $\thdif$ extends to a $\Cinf$ map $\thdif:\tdisk\to\tdisk$ being a diffeomorphism near $\phi$-axis $\{\rrho=0\}$.
We will not use this fact in the present paper.
\end{remark}

Let $\AFld$ be a \TC\ vector field on $\disk$.
Since $\AFld$ is non-singular on $\disko$, $\AFld$ lifts to a unique vector field $\BFld$ on $\tdisko$ such that $\AFld\circ P = TP \circ \BFld$.

It is easy to see that every orbit $\torb$ of $\BFld$ is non-closed, its image $\orb=P(\torb)$ is an orbit of $\AFld$, and the map $P:\torb\to\orb$ is a $\ZZZ$-covering map.

Let $\BFlow:\tdisko\times\RRR\to\tdisko$ be the flow generated by $\BFld$, then we have the following commutative diagram: 
\begin{equation}\label{equ:lift_flow}
\begin{CD}
\tdisko\times\RRR @>{\BFlow}>> \tdisko \\
@V{P\times \id_{\RRR}}VV @VV{P}V \\
(\disko)\times\RRR @>{\AFlow}>> \disko
\end{CD}
\end{equation}
In other words, $\AFlow_t \circ P(\tz) = P \circ \BFlow_t(\tz)$ for all $\tz\in\tdisko$ and $t\in\RRR$.

In particular, if $\afunc:\disk\to\RRR$ is a $\Cinf$ function and $\hdif=\ShA(\afunc)$, i.e. $\hdif(z)=\AFlow(z,\afunc(z))$, then the map $\thdif:\tdisk\to\tdisk$ given by $\BFlow(\tz,\afunc\circ P(\tz))$ is a lifting of $\hdif$.
Indeed,
\begin{equation}\label{equ:lift_shift_func}
\hdif\circ P(\tz) = 
\AFlow(P(\tz),\afunc\circ P(\tz)) = 
P \circ \BFlow(\tz,\afunc\circ P(\tz)) = P\circ\thdif(\tz).
\end{equation}

\begin{lemma}\label{lm:exist_shift_func}
Let $\hdif\in\EApl$.
Then there exists a $\Cinf$ shift function $\bfunc:\disko\to\RRR$ for $\hdif$ on $\disko$, i.e. $\hdif(z)=\AFlow(z,\bfunc(z))$ for $z\in\disko$.
Any other $\Cinf$ shift function for $\hdif$ on $\disko$ is given by $\bfunc + n\theta$ for some $n\in\ZZZ$.

If $\nabla\AFld$ is degenerate, then any $\hdif\in\EApl$ has at most one $\Cinf$ shift function defined on all of $\disk$.
\end{lemma}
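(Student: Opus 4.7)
The plan is to exploit the covering $P:\tdisko\to\disko$ by lifting $\hdif$ and using that orbits of the lifted vector field $\BFld$ are non-closed, which converts the obstructed ``shift modulo period'' problem into a clean question about a unique real shift along a free non-periodic flow.

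First I would choose a lift $\thdif:\tdisko\to\tdisko$ of $\hdif|_{\disko}$ and verify two equivariance/invariance properties. The first is that $\thdif\circ\eta=\eta\circ\thdif$: since $\hdif\in\EApl$ has positive Jacobian at $\orig$, one can compare $\hdif$ with its linearization to see that $\hdif$ acts as the identity on $\pi_1(\disko)\cong\ZZZ$, so the constant $k$ in $\thdif\circ\eta=\eta^k\circ\thdif$ must be $1$. The second is that $\thdif$ preserves each orbit $\torb$ of $\BFld$; this is automatic because $\hdif$ preserves each orbit $\orb$ of $\AFld$, and the lifted generator of $\pi_1(\orb)$ shows that $P^{-1}(\orb)$ is a single orbit $\torb$ of $\BFld$.

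Next, since $\BFld$ is nonsingular on $\tdisko$ and all its orbits are non-closed lines, for every $\tz\in\tdisko$ there is a \emph{unique} real number $\tbfunc(\tz)\in\RRR$ with $\thdif(\tz)=\BFlow(\tz,\tbfunc(\tz))$. Smoothness of $\tbfunc$ is the one routine but nontrivial point: locally one straightens $\BFld$ to a coordinate vector field, in which the relation $\thdif(\tz)=\BFlow(\tz,\tbfunc(\tz))$ reads off $\tbfunc$ as a smooth coordinate of $\thdif$. Using the commutative diagram~\eqref{equ:lift_flow} together with $P\circ\eta=P$ one gets $\eta\circ\BFlow_t=\BFlow_t\circ\eta$; combining this with $\thdif\circ\eta=\eta\circ\thdif$ and the uniqueness of $\tbfunc$ gives $\tbfunc\circ\eta=\tbfunc$, so $\tbfunc$ descends to a smooth shift function $\bfunc:\disko\to\RRR$ for $\hdif$.

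For the uniqueness up to $n\theta$: if $\bfunc'$ is another smooth shift function on $\disko$, then for every $z\in\disko$ one has $\AFlow(z,\bfunc(z))=\AFlow(z,\bfunc'(z))$, so $\bfunc(z)-\bfunc'(z)\in\theta(z)\,\ZZZ$. Hence $(\bfunc-\bfunc')/\theta:\disko\to\ZZZ$ is smooth and integer-valued, and so constant by connectedness of $\disko$. Finally, for the last clause, suppose $\nabla\AFld$ is degenerate and that two smooth shift functions $\bfunc,\bfunc'$ for $\hdif$ were defined on all of $\disk$. Their difference equals $n\theta$ on $\disko$ for some $n\in\ZZZ$; but statement~(4) of Proposition~\ref{pr:info_on_TC} gives $\lim_{z\to\orig}\theta(z)=+\infty$, so $n\theta$ extends continuously to $\disk$ only when $n=0$, forcing $\bfunc=\bfunc'$. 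The main obstacle I expect is the clean verification of $\eta$-invariance of $\tbfunc$, which requires correctly tracking the three commutations among $\thdif$, $\eta$, and $\BFlow$; everything else is a standard application of flow box coordinates and connectedness.
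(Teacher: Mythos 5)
Your proposal is correct and takes essentially the same route as the paper: lift $\hdif$ through the $\ZZZ$-covering $P$ to the strip, use that the orbits of the lifted field $\BFld$ are non-closed to obtain a unique smooth shift function $\tbfunc$ upstairs, check $\eta$-invariance to descend to $\bfunc$ on $\disko$, derive uniqueness up to $n\theta$ from smoothness and connectedness, and kill $n$ in the degenerate case via $\lim_{z\to\orig}\theta(z)=+\infty$ from Proposition~\ref{pr:info_on_TC}(4). The only difference is that you make explicit two points the paper leaves implicit (that $\hdif$ acts trivially on $\pi_1(\disko)$, so the lift commutes with $\eta$, and the flow-box argument for smoothness of $\tbfunc$), and both of these fill-ins are sound.
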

\begin{proof}
By definition $\hdif^{-1}(\orig)=\orig$ and $\hdif$ is a local diffeomorphism at $\orig$.
Then, as noted above, there exists a $\Cinf$ lifting $\thdif:\tdisko\to\tdisko$ of $\hdif$ such that $P\circ \thdif=\hdif\circ P$.
Moreover, $\hdif$ preserves orbits of $\AFld$, whence $\hdif(\torb)=\torb$ for each orbit $\torb$ of $\BFld$.

Since the orbits of $\BFld$ are non-closed there exists a unique $\Cinf$ shift function $\tbfunc:\tdisko\to\RRR$ for $\thdif$, i.e. $\thdif(\tz)=\BFlow(\tz,\tafunc(\tz))$ for all $\tz\in\tdisko$.
Also notice that $\BFlow_t$ and $\thdif$ are $\ZZZ$-equivariant.
This easily implies that $\tbfunc$ is $\ZZZ$-invariant, whence it defines a unique $\Cinf$ function $\bfunc:\disko\to\RRR$ such that $\tbfunc= \bfunc\circ P$.
Then it follows from~\eqref{equ:lift_shift_func} that $\bfunc$ is a shift function for $\hdif$ with respect to $\AFld$.

\medskip

Suppose that $\afunc:\disko\to\RRR$ is another $\Cinf$ shift function for $\hdif$ on $\disko$.
Then $\hdif(z)=\AFlow(z,\afunc(z))=\AFlow(z,\bfunc(z))$ for $z\not=0$, whence the difference $\afunc(z)-\bfunc(z)$ is a certain integer multiple of the period $\theta(z)$ of $z$.
Since $\theta$ and $\afunc-\bfunc$ are $\Cinf$ on $\disko$, it follows that $\afunc-\bfunc = n\theta$ for some $n\in\ZZZ$.

\medskip

Finally, suppose that $\nabla\AFld$ is degenerate, and $\afunc,\bfunc:\disk\to\RRR$ are two $\Cinf$ shift function for $\hdif$ defined on all of $\disk$.
Then they also shift functions for $\hdif$ on $\disko$, whence $\afunc-\bfunc=n\theta$ for some $n\in\ZZZ$.
But by Proposition~\ref{pr:info_on_TC} $\lim\limits_{z\to\orig}\theta(z)=+\infty$, while $\afunc-\bfunc$ is $\Cinf$ on all of $\disk$.
Hence $n=0$, i.e. $\afunc=\bfunc$.
\end{proof}

\section{$(\Ksp,r)$-deformations}\label{sect:Kr_deform}
Let $A$, $B$ be smooth manifolds.
Then the space $C^{\infty}(A,B)$ admits a series $\{\Wr{r}\}_{r=0}^{\infty}$ of \myemph{weak} topologies, see~\cite{Hirsch:DiffTop}.
Topology $\Wr{0}$ coincides with the \myemph{compact open} one.
Let $J^{r}(A,B)$, $(r<\infty)$, be the manifold of $r$-jets of maps $A\to B$.
Then there is a natural inclusion $i_{r}:C^{\infty}(A,B) \subset C^{\infty}(A,J^{r}(A,B))$ associating to each $\func:A\to B$ its $r$-jet prolongation $j^{r}(f):A\to J^{r}(A,B)$.
Endow $C^{\infty}(A,J^{r}(A,B))$ with $\Wr{0}$ topology.
Then the topology on $C^{\infty}(A,B)$ induced by $i_{r}$ is called $\Wr{r}$ topology.
Finally, the topology $\Wr{\infty}$ is generated by all $\Wr{r}$ for $0\leq r <\infty$.

Let $\XX\subset C^{\infty}(A,B)$ be a subset, $\Ksp$ be a Hausdorff, locally compact topological space, and $\omega:\Ksp\to \XX$ be a map.
Then $\omega$ induces the following mapping $\Omega:\Ksp\times A \to B$ defined by $\Omega(a,k) = \omega(k)(a)$.
Conversely, every map $\Omega:\Ksp\times A \to B$ such that $\Omega(k,\cdot):A\to B$ belongs to $\XX$ induces a map $\omega:\Ksp\to\XX$.

Endow $\XX$ with the induced $\Wr{0}$ topology.
Then it is well known, e.g.~\cite[\S44.IV]{Kuratowski:2}, that $\omega$ is continuous if and only if $\Omega$ is so.

\begin{definition}
Let $r=0,\ldots,\infty$.
Then a map $\Omega:\Ksp\times A\to B$ will be called a \myemph{$(K,r)$-deformation in $\XX$} if $\Omega_{k}\in \XX$ for all $k\in\Ksp$ and the induced map $\omega:K\to\XX$ is continuous whenever $\XX$ is endowed with $\Wr{r}$ topology.
In other words the map $j^r:\Ksp\times A \to J^r(A,B)$ associating to each $(k,a)\in\Ksp\times A$ the $r$-jet prolongation $j^{r}\Omega_k(a)$ of $\Omega_k$ at $a$ is continuous.

If $\Ksp=[0,1]$ then a $(\Ksp,r)$-deformation will be called an \myemph{$r$-homotopy}.
\end{definition}

\section{Shift functions for $(\Ksp,r)$-deformations}\label{sect:sh_func_Kr_def}
Let $\Ksp$ be a Hausdorff, locally compact, and path connected topological space, $\AFld$ be a \TC\ vector field on $\disk$.
Let also  
$$\omega:\Ksp\to\EAplr{r}$$ be a continuous map into some $\Wr{r}$ topology of $\EApl$, and 
\begin{equation}\label{equ:Kr_deform_for_omega}
\Omega:\Ksp\times\disk\to\disk,
\qquad
\Omega(k,z) = \omega(k)(z).
\end{equation}
be the corresponding $(\Ksp,r)$-deformation in $\EApl$, so $\Omega_k\in\EApl$ for all $k\in\Ksp$.

Then by Lemma~\ref{lm:exist_shift_func} for each $k\in\Ksp$ the map $\Omega_k$ has a (not unique) $\Cinf$ shift function $\Lambda_k$ defined on $\disko$.
Thus we can define a map $\Lambda:\Ksp\times(\disko)\to\RRR$ by $\Lambda(k,z)=\Lambda_k(z)$ which in general is not even continuous, though it is $\Cinf$ for each $k$.

\begin{definition}\em
A $(\Ksp,r)$-deformation $\Lambda:\Ksp\times(\disko)\to\RRR$ satisfying
\begin{equation}\label{equ:Omega_ShA_Lambda}
\Omega(k,z) = \AFlow(z,\Lambda(k,z))
\qquad \forall \,(k,z)\in \Ksp \times (\disko).
\end{equation}
will be called a \myemph{shift function} for the $(\Ksp,r)$-deformation $\Omega$.
\end{definition}

The following lemma is a particular case of results of~\cite{Maks:ImSh}, see also~\cite[Th.~25]{Maks:Shifts}.
It describes existence and uniqueness of shift functions for $(\Ksp,r)$-deformations.

\begin{lemma}\label{lm:ext_shift_func_via_homotopy}{\rm\cite{Maks:ImSh}}
Let $k_0\in\Ksp$ and $\Lambda_{k_0}$ be any $\Cinf$ shift function for $\Omega_{k_0}$.
Then there exists at most one shift function $\Lambda:\Ksp\times(\disko)\to\RRR$ for $\Omega$ such that $\Lambda(k_0,z)=\Lambda_{k_0}(z)$.

Moreover, if $\Ksp$ is simply connected, i.e.\! $\pi_1\Ksp=0$, then any shift function $\Lambda_{k_0}$ for $\Omega_{k_0}$ uniquely extends to a shift function $$\Lambda:\Ksp\times(\disko)\to\RRR$$ for $\Omega$.
\qed
\end{lemma}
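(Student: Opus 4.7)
The plan is to pass to the universal cover $P:\tdisko\to\disko$ from Section~\ref{sect:shift_func_EApl}, where each orbit of the lifted vector field $\BFld$ is non-closed and hence a free $\RRR$-orbit. Upstairs, a shift function for an orbit-preserving map is \emph{automatically} unique, which converts the downstairs ambiguity $\bfunc\mapsto\bfunc+n\theta$ of Lemma~\ref{lm:exist_shift_func} into a monodromy question controlled by $\pi_1(\Ksp)$.

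\textbf{Uniqueness.} Suppose $\Lambda,\Lambda'$ are two shift functions for $\Omega$ that agree on $\{k_0\}\times\disko$. For each fixed $k$, Lemma~\ref{lm:exist_shift_func} gives $\Lambda(k,\cdot)-\Lambda'(k,\cdot)=n(k)\theta$ for some $n(k)\in\ZZZ$. Picking any $z_1\in\disko$, the formula $n(k)=(\Lambda(k,z_1)-\Lambda'(k,z_1))/\theta(z_1)$ exhibits $n$ as a continuous $\ZZZ$-valued function on $\Ksp$. Since $\Ksp$ is path connected and $n(k_0)=0$, we conclude $n\equiv0$. This proves the first assertion without any hypothesis beyond path connectedness.

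\textbf{Existence when $\pi_1(\Ksp)=0$.} Given $\Lambda_{k_0}$, form its smooth $\ZZZ$-equivariant lift
$$
\widetilde\Omega_{k_0}(\widetilde z)\;=\;\BFlow\bigl(\widetilde z,\,\Lambda_{k_0}\circ P(\widetilde z)\bigr),
$$
which covers $\Omega_{k_0}$ by~\eqref{equ:lift_shift_func}. Now view
$\Omega\circ(\id_{\Ksp}\times P):\Ksp\times\tdisko\to\disko$
as a continuous map into the base of $P$. Since $\tdisko$ is a simply connected half-strip, the product $\Ksp\times\tdisko$ is simply connected, so the standard covering-space lifting criterion produces a unique continuous lift $\widetilde\Omega:\Ksp\times\tdisko\to\tdisko$ normalised by $\widetilde\Omega(k_0,\cdot)=\widetilde\Omega_{k_0}$. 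Because each $\Omega_k\in\EApl$ preserves orbits of $\AFld$, each $\widetilde\Omega_k$ preserves orbits of $\BFld$, and these are non-closed; hence there is a unique smooth $\widetilde\Lambda:\Ksp\times\tdisko\to\RRR$ with $\widetilde\Omega(k,\widetilde z)=\BFlow(\widetilde z,\widetilde\Lambda(k,\widetilde z))$. The deck transformation equivariance $\widetilde\Omega_k\circ\eta=\eta\circ\widetilde\Omega_k$, combined again with this uniqueness, forces $\widetilde\Lambda$ to be $\ZZZ$-invariant, so it descends to the required $\Lambda$ on $\Ksp\times\disko$.

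\textbf{Main obstacle.} The genuinely delicate step, which is the content of~\cite{Maks:ImSh}, is upgrading the joint continuity of $\Lambda$ inherited from the lift $\widetilde\Omega$ to the statement that $\Lambda$ is an honest $(\Ksp,r)$-deformation, i.e.\ that the $r$-jet prolongation of $\Lambda_k$ in $z$ depends $\Wr{0}$-continuously on $k$. Inverting the shift operation at the level of $r$-jets requires the $\Wr{r}$-continuity of $\Omega$ together with the transversality of $\AFld$ to its orbits in order to propagate $z$-derivatives continuously through the relation $\Omega=\AFlow\circ(\id,\Lambda)$. This is the one place where the $\Wr{r}$ machinery of Section~\ref{sect:Kr_deform} is needed essentially rather than superficially.
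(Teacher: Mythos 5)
There is no internal proof to compare against here: the paper states this lemma with a \emph{reference} to \cite{Maks:ImSh} in place of an argument. Your reconstruction follows exactly the route the paper's own machinery suggests: \S\ref{sect:shift_func_EApl} sets up the covering $P:\tdisko\to\disko$, the lifted field $\BFld$ with non-closed orbits, and the relation \eqref{equ:lift_shift_func}, and the proof of Lemma~\ref{lm:exist_shift_func} is precisely the parameter-free case of your argument. Your uniqueness paragraph is complete and correct: by definition a shift function for a $(\Ksp,r)$-deformation is itself a $(\Ksp,r)$-deformation, hence jointly continuous; path connectedness of $\Ksp$ is a standing hypothesis of \S\ref{sect:sh_func_Kr_def}; and $\theta>0$ on $\disko$, so $n(k)$ is a continuous $\ZZZ$-valued function vanishing at $k_0$. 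The equivariance-and-descent step is also sound, since the normalized lift $\widetilde\Omega$ commutes with $\eta$ by uniqueness of lifts on a connected domain.

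Two caveats on the existence half. First, the ``standard covering-space lifting criterion'' requires the domain to be \emph{locally path-connected}, and the standing hypotheses grant $\Ksp$ only Hausdorff, local compactness, path connectedness and (here) simple connectedness. This is harmless in every application made in this paper, where $\Ksp$ is a point or a cube $I^n$, but it is not cosmetic: in the non-degenerate case, where $\theta$ is smooth on all of $\disk$, take for $\Ksp$ the Warsaw circle (compact, path connected, $\pi_1=0$, not locally path-connected), let $\tau:\Ksp\to\RRR/\ZZZ$ be the standard non-liftable map, and set $\Omega(k,z)=\AFlow(z,\tau(k)\theta(z))$, which is well defined and is a $(\Ksp,r)$-deformation in $\EApl$ since $\tau$ is locally liftable. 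Any jointly continuous shift function $\Lambda$ would have $\Lambda(k,z)/\theta(z)$ continuous and, for each fixed $k$, valued in the discrete coset $\tau(k)+\ZZZ$, hence constant in $z$, producing a continuous lift of $\tau$ to $\RRR$ --- a contradiction. So the local path-connectedness you invoke silently is exactly where the real hypothesis lives, and it deserves to be stated. Second, you defer the upgrade from joint continuity of $\Lambda$ to the $(\Ksp,r)$-deformation property to \cite{Maks:ImSh}; that is defensible (the paper cites the same source for the whole lemma), but your description of the mechanism is garbled: $\AFld$ is \emph{tangent} to its orbits, not transversal. What actually makes this step routine on $\disko$ is that $\AFld$ is nonsingular there and the orbits of $\BFld$ are properly embedded, so in flow-box charts $\Lambda(k,z)$ is a fixed smooth local ``time'' function evaluated at $(z,\Omega(k,z))$, and $\Wr{r}$-continuity transfers from $\Omega$ to $\Lambda$ immediately; the genuinely delicate results of \cite{Maks:ImSh} concern behavior at the singular point $\orig$, as in Lemma~\ref{lm:ext_shift_func}, not this one.
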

This lemma will be used in the proof of Theorems~\ref{th:DA_EA_weak_he} and~\ref{th:EAd_EApl}.
For the proof of Theorem~\ref{th:EA_non-deg-cond} we will also need the following Lemmas~\ref{lm:ext_shift_func} and~\ref{lm:deform_in_imShA}.

Suppose now that $\omega(\Ksp)\subset\imShA$, that is for each $k\in\Ksp$ the map $\Omega_k$ has a $\Cinf$ shift function defined on all of $\disk$.
Let $\Lambda:\Ksp\times(\disko)\to\RRR$ be a shift function for $\Omega$ such that $\Lambda_{k_0}$ for some $k_0\in\Ksp$ smoothly extends to all of $\disk$.
The following lemma gives sufficient conditions when every other shift function $\Lambda_k=\Lambda(k,\cdot)$ smoothly extends to all of $\disk$.
Again it is a particular case of results of~\cite{Maks:ImSh}.
\begin{lemma}\label{lm:ext_shift_func}{\rm\cite{Maks:ImSh}}
Let $\Omega:\Ksp\times\disk\to\disk$ be a $(\Ksp,r)$-deformation admitting a 
shift function $\Lambda:\Ksp \times (\disko) \to \RRR$.
Suppose that 
\begin{enumerate}
 \item[\rm(i)] $\ker\jet\subset\imShA$,
 \item[\rm(ii)] $\Omega_0=\id_{\disk}$ for some $k_0\in\Ksp$, and
 \item[\rm(iii)] $\jt{\Omega_k}=\id$, i.e. $\Omega_k\in\ker\jet \subset\imShA$, for all $k\in\Ksp$.
\end{enumerate}
Then for each $k\in\Ksp$ the function $\Lambda_k:\disko\to\RRR$ extends to a $\Cinf$ function on all of $\disk$, though the induced function $\Lambda:\Ksp\times\disk\to\RRR$ is not necessarily continuous.
\qed
\end{lemma}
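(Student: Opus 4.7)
The plan is to compare the given shift function $\Lambda_k$ on $\disko$ with a \emph{globally} smooth shift function for $\Omega_k$ whose existence is forced by hypotheses (i) and (iii), and to show that the integer discrepancy between them, measured in multiples of the period $\theta$, vanishes.

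\emph{Step 1 (global smooth candidate $\mu_k$).} For each $k\in\Ksp$, combining (iii) and (i) gives $\Omega_k\in\ker\jet\subset\imShA$, so there is some $\mu_k\in\Ci{\disk}{\RRR}$ with $\Omega_k=\ShA(\mu_k)$. On $\disko$, $\mu_k|_{\disko}$ and $\Lambda_k$ are both $\Cinf$ shift functions for the same map, so by Lemma~\ref{lm:exist_shift_func} there is an integer $n_k\in\ZZZ$ with $\Lambda_k = \mu_k + n_k\theta$ on $\disko$.

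\emph{Step 2 (reduction to the degenerate case).} If $\nabla\AFld$ is non-degenerate, Proposition~\ref{pr:info_on_TC}\,(4) says $\theta$ is $\Cinf$ on all of $\disk$, so $\Lambda_k = \mu_k + n_k\theta$ already extends smoothly and there is nothing more to do. In the degenerate case Proposition~\ref{pr:info_on_TC}\,(4) gives $\theta(z)\to+\infty$ as $z\to\orig$, so $\Lambda_k$ extends $\Cinf$-smoothly across $\orig$ if and only if $n_k=0$; moreover $\mu_k$ is then the \emph{unique} smooth shift function on $\disk$ by the second half of Lemma~\ref{lm:exist_shift_func}.

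\emph{Step 3 (vanishing of $n_k$).} Fix $z_0\in\disko$, so $\theta(z_0)>0$, and rewrite
\[
n_k \;=\; \frac{\Lambda(k,z_0) - \mu_k(z_0)}{\theta(z_0)}.
\]
The term $\Lambda(k,z_0)$ is continuous in $k$ by the $(\Ksp,r)$-deformation hypothesis on $\Omega$. If one also knows that $k\mapsto\mu_k(z_0)$ is continuous, then the integer-valued map $k\mapsto n_k$ is locally constant, hence constant by path-connectedness of $\Ksp$. At the base point, (ii) gives $\Omega_{k_0}=\id$, whence $\mu_{k_0}=0$ by uniqueness; taking the natural normalisation $\Lambda_{k_0}=0$ yields $n_{k_0}=0$, and consequently $n_k\equiv 0$ and $\Lambda_k=\mu_k$ for every $k$, proving the smooth extension.

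The main obstacle is the continuity $k\mapsto\mu_k(z_0)$: going from pointwise existence of a smooth extension across the singular point to continuous dependence on the parameter is delicate, because $\mu_k$ is reconstructed through the orbit passing through $\orig$, where $\theta$ and the lifted flow on the strip $\tdisko$ degenerate. This is exactly the content of the results of~\cite{Maks:ImSh} invoked in~\S\ref{sect:sh_func_Kr_def}, and is the technical heart of the argument.
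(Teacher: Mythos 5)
There is no internal proof in the paper to compare against: Lemma~\ref{lm:ext_shift_func} is imported wholesale from \cite{Maks:ImSh} (its statement ends with a reference and no argument), so your proposal has to stand on its own, and it does not close. Steps~1 and~2 are correct but routine: (i) and (iii) produce a global $\Cinf$ shift function $\mu_k$ for each $\Omega_k$; Lemma~\ref{lm:exist_shift_func} gives $\Lambda_k=\mu_k+n_k\theta$ on $\disko$; the non-degenerate case is trivial since $\theta$ is $\Cinf$ on all of $\disk$ by Proposition~\ref{pr:info_on_TC}; and in the degenerate case smooth extendability of $\Lambda_k$ is equivalent to $n_k=0$. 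The entire content of the lemma is then concentrated in Step~3, where you \emph{assume} that $k\mapsto\mu_k(z_0)$ is continuous. This is not a deferrable technicality: given Steps~1--2 it is \emph{equivalent} to the conclusion. Indeed, $n_k=\bigl(\Lambda(k,z_0)-\mu_k(z_0)\bigr)/\theta(z_0)$ with $\Lambda(\cdot,z_0)$ continuous by hypothesis, so continuity of $k\mapsto\mu_k(z_0)$ is precisely local constancy of $n_k$, i.e.\ the statement to be proved; conversely, once the lemma is known one gets $\mu_k(z_0)=\Lambda(k,z_0)$, hence the continuity. A priori the functions $\mu_k$ are produced one $k$ at a time by the inclusion $\ker\jet\subset\imShA$ and carry no parameter regularity whatsoever. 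So your argument reduces the lemma to an equivalent claim and then cites for that claim the very reference \cite{Maks:ImSh} from which the paper quotes the lemma; beyond the easy bookkeeping, nothing is actually proved.

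A secondary flaw: ``taking the natural normalisation $\Lambda_{k_0}=0$'' is not available under the hypotheses as you state them. The shift function $\Lambda$ is \emph{given}, and (ii) only says $\Omega_{k_0}=\id_{\disk}$, so the given $\Lambda_{k_0}$ may equal $m\theta$ for any $m\in\ZZZ$; for $m\neq0$ in the degenerate case the conclusion already fails at $k=k_0$, since $m\theta$ is unbounded near $\orig$. Replacing $\Lambda$ by $\Lambda-m\theta$ changes the function whose extendability is asserted, so one cannot normalize ``without loss of generality''. What makes the statement correct is the standing assumption formulated just before the lemma in \S\ref{sect:sh_func_Kr_def}, namely that $\Lambda_{k_0}$ itself extends $\Cinf$-smoothly to $\disk$, which in the degenerate case forces $\Lambda_{k_0}=0$ when $\Omega_{k_0}=\id_{\disk}$. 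You should invoke that hypothesis explicitly rather than choose a normalization; and in any case this repair does not fill the missing continuity argument in Step~3, which is the genuine gap.
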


Finally, we present a sufficient condition when a map into $\imShA$ can be deformed into $\ker\jet$.
\begin{lemma}\label{lm:deform_in_imShA}
Let $\Ksp$ be path connected and simply connected, $r\geq1$, and $\omega:\Ksp\to\imShA$ be a continuous map into $\Wr{r}$ topology of $\imShA$.
Suppose $\nabla\AFld$ is degenerate and $\ker\jet\subset\imShA$.
Then there exists a homotopy $\Bhom:I\times \Ksp\to\imShA$ such that $\Bhom_0=\omega$, $\Bhom_1(\Ksp)\subset\ker\jet$, and $\Bhom_t(k)=\omega(k)$ for all $k$ such that $\omega(k)\in\ker\jet$.
\end{lemma}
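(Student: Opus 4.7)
The plan is to split the argument according to whether $\nabla\AFld$ vanishes, using Proposition~\ref{pr:info_on_TC}. In the case $\nabla\AFld=0$, part \NFZ\ of that proposition gives $\imShA\subset\ker\jet$, so $\omega(\Ksp)\subset\ker\jet$ already and the constant homotopy $\Bhom_t\equiv\omega$ satisfies every required condition trivially. The real content is case \NFS, where in suitable coordinates $\nabla\AFld=\left(\begin{smallmatrix}0 & a\\ 0 & 0\end{smallmatrix}\right)$ with $a\neq 0$.

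In that setting I would define the homotopy explicitly as
\[
\Bhom_t(k):=\AFlow_{-tc(k)}\circ\omega(k),
\]
where the scalar $c(k)\in\RRR$ is chosen so that $\Bhom_1(k)\in\ker\jet$. By part \NFS\ of Proposition~\ref{pr:info_on_TC}, $\jet(\omega(k))\in\Apl$, so it has the form $\left(\begin{smallmatrix}1 & b(k)\\ 0 & 1\end{smallmatrix}\right)$ for a unique $b(k)\in\RRR$, and the natural choice is $c(k):=b(k)/a$. That $\Bhom_t(k)$ stays in $\imShA$ is immediate: the time-$s$ map $\AFlow_s$ is the shift $\ShA(s)$ via the constant function, and Lemma~\ref{lm:shift-func-for-frac} then gives $\Bhom_t(k)=\ShA(\mu_k-tc(k))$ for any $\Cinf$ shift function $\mu_k$ of $\omega(k)$.

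Verification of the three conclusions is then straightforward. Clearly $\Bhom_0(k)=\omega(k)$. Since $\nabla\AFld$ is nilpotent of square zero, $\jet(\AFlow_s)=e^{s\nabla\AFld}=I+s\,\nabla\AFld$, whence by the chain rule at $\orig$ (which is fixed by $\omega(k)$)
\[
\jet(\Bhom_1(k))=(I-c(k)\nabla\AFld)\cdot\jet(\omega(k))=\left(\begin{smallmatrix}1 & -b(k)\\ 0 & 1\end{smallmatrix}\right)\left(\begin{smallmatrix}1 & b(k)\\ 0 & 1\end{smallmatrix}\right)=\id,
\]
so $\Bhom_1(\Ksp)\subset\ker\jet$. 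Moreover, when $\omega(k)\in\ker\jet$ one has $b(k)=0$, hence $c(k)=0$ and $\Bhom_t(k)=\omega(k)$ for every $t$.

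The step I expect to require the most care is the $\Wr{r}$-continuity of $\Bhom$. Continuity of $b$, and therefore of $c$, is immediate, since with $r\geq 1$ the operation $\jet$ is $\Wr{r}$-continuous. Writing $\Bhom(t,k,z)=\AFlow(\omega(k)(z),-tc(k))$, $\Wr{r}$-continuity in $(t,k)$ reduces to the standard fact that left composition with the smoothly varying family $\{\AFlow_s\}_{s\in\RRR}$ of diffeomorphisms of $\disk$ is $\Wr{r}$-continuous on $\Ci{\disk}{\disk}$, which uses smoothness of $\AFlow$ and compactness of $\disk$. Notably the hypotheses $\pi_1\Ksp=0$ and $\ker\jet\subset\imShA$ do not enter this construction directly; they are needed elsewhere in the proof of Theorem~\ref{th:EA_non-deg-cond}.
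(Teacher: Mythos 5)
Your proof is correct and is essentially the paper's own: the paper likewise dispatches $\nabla\AFld=0$ via $\imShA\subset\ker\jet$ (Proposition~\ref{pr:info_on_TC}\NFZ), and for $\nabla\AFld=\left(\begin{smallmatrix}0 & a\\ 0 & 0\end{smallmatrix}\right)$ it defines the identical homotopy $\Bhom(t,k)(z)=\AFlow(\Omega(k,z),-t\tau(k))$, where $a\tau(k)$ is the off-diagonal entry of $\jet(\omega(k))\in\Apl$ and $r\geq1$ guarantees continuity of $\tau$. The only difference is that you spell out the verifications (the jet computation via $\jet(\AFlow_s)=e^{s\nabla\AFld}$, membership in $\imShA$ via Lemma~\ref{lm:shift-func-for-frac}, and $\Wr{r}$-continuity) which the paper leaves as ``easy to see''.
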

\begin{proof}
If $\nabla\AFld=0$, then $\imShA=\ker\jet$ and there is nothing to prove.

Suppose that $\nabla\AFld=\left(\begin{smallmatrix} 0 & a \\ 0 & 0  \end{smallmatrix}  \right)$ for some $a\not=0$.
Let $\Omega:\Ksp\times\disk\to\disk$ be the corresponding $(\Ksp,r)$-deformation in $\imShA$.
Then 
$$
\jt{\Omega_k}=\left(\begin{smallmatrix} 1 & a\, \tau(k) \\ 0 & 1  \end{smallmatrix} \right), 
\qquad 
k\in I,
$$
for some $\tau(k)\in \RRR$.
Since $\Omega$ is an $r$-homotopy with $r\geq1$, it follows that the function $\tau:\Ksp\to\RRR$ is continuous.
Moreover, $\tau(k)=0$ if and only if $\Omega_k\in\ker\jet$.

Define the homotopy $\Bhom:I\times\Ksp\to\imShA$ by 
$$
\Bhom(t,k)(z) = \AFlow(\Omega(k,z), -t\tau(k)).
$$
Then it is easy to see that $\Bhom$ satisfies the statement of our lemma.
\end{proof}

\section{Deformations in $\EApl$}\label{sect:deform_in_EApl}
In this section we prove the key Proposition~\ref{pr:deform_in_EApl} which will imply Theorems~\ref{th:DA_EA_weak_he}, \ref{th:EAd_EApl}, and~\ref{th:EA_non-deg-cond}.

Let $\Ksp$ be a Hausdorff, locally compact topological space and 
$$\omega:\Ksp\to\EAplr{r}$$ be a continuous map into some $\Wr{r}$ topology of $\EApl$.
Our aim is to show that under certain mild assumptions $\omega$ is homotopic to a map into $\DAd=\DApl\cap\EAd$ so that the intersections of $\omega(\Ksp)$ with $\DAd$, $\DApl$ and $\EAd$ remain in the corresponding spaces during all the homotopy.
More precisely the following result holds true:
\begin{proposition}\label{pr:deform_in_EApl}
Suppose that either 
\begin{enumerate}
 \item[\rm(i)]
$\Ksp$ is a point and $r\geq0$, or
 \item[\rm(ii)] 
$\Ksp$ is compact, path connected, and simply connected, and $r\geq1$.
\end{enumerate}

Let $\Ldf\subset\Ksp$ be a (possibly empty) subset such that $\omega(\Ldf)\subset\DApl$, and $\Kpc\subset\Ksp$ be a connected subset such that $\omega(\Kpc)\subset\EAd$.
Thus we can regard $\omega$ as a map of triples
$$
\omega:(\Ksp; \Ldf, \Kpc') \to (\EAplr{r};\DAplr{r},\EAdr{r}).
$$
Then there exists a homotopy of triples 
$$\Ahom_t:(\Ksp; \Ldf, \Kpc')\to(\EAplr{r};\DAplr{r},\EAdr{r}), \quad t\in I,$$
such that 
\begin{equation}\label{equ:Ahom_01}
\Ahom_0=\omega
\qquad  \text{and} \qquad 
\Ahom_1(\Ksp)\subset\DAd.
\end{equation}
The phrase \myemph{homotopy of triples} means that 
\begin{equation}\label{equ:omegat_L_Dapl__K_EAd}
\Ahom_t(\Ldf)\subset \DApl,
\qquad
\Ahom_t(\Kpc)\subset \EAd,
\end{equation}
and therefore $\Ahom_t(\Ldf \cap \Kpc)\subset \DApl \cap \EAd = \DAd$ for all $t\in I$.
\end{proposition}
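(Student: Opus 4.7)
The plan is to build $\Ahom_t$ in two stages, deforming $\omega$ first into $\EAd$ and then into $\DAd$. By assumption $\Ksp$ is either a point or simply connected, so Lemma~\ref{lm:ext_shift_func_via_homotopy} yields a shift function $\Lambda:\Ksp\times\disko\to\RRR$ for the deformation $\Omega$. I first normalize $\Lambda$ using the ambiguity $\Lambda\mapsto\Lambda+n\theta$ from Lemma~\ref{lm:exist_shift_func}: for $k\in\Kpc$ the assumption $\omega(k)\in\EAd$ forces $\Lambda(k,\cdot)|_{\partial\disk}$ to take values in the discrete set $T_{\partial}\ZZZ$ (where $T_{\partial}$ is the period of the boundary orbit), and since $\Kpc$ is connected and $\Lambda$ continuous in $k$ this is a fixed multiple $nT_{\partial}$; subtracting $n\theta$ then makes $\Lambda$ vanish on $\partial\disk$ over all of $\Kpc$.

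Stage~1 fixes a smooth cutoff $\mu:\disk\to[0,1]$ vanishing near $\orig$ and equal to $1$ on a collar $U$ of $\partial\disk$, and sets
$$\Ahom_s(k)(z)=\AFlow\bigl(z,(1-s\mu(z))\Lambda(k,z)\bigr),\qquad s\in[0,1],\ z\in\disko,$$
extended by $\orig\mapsto\orig$. Near $\orig$ this coincides with $\omega(k)$, giving smoothness and preservation of the $1$-jet at $\orig$. At $s=1$ the shift vanishes on $U$, so $\Ahom_1(\Ksp)\subset\EAd$, and the normalization forces $\Ahom_s(\Kpc)\subset\EAd$ for all $s$. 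The delicate point is preserving $\DApl$ on $\Ldf$: by Lemma~\ref{lm:when_h_in_DApl} we need $\AFld\bigl((1-s\mu)\Lambda\bigr)\neq -1$ pointwise, together with injectivity on each orbit. Using compactness of $\Ksp$ and the fact that $\Lambda$ and its derivatives are bounded on the compact set $\supp\mu\subset\disko$, I would choose $\mu$ flat enough (so $\sup|\AFld(\mu)|$ is small relative to $\sup|\Lambda|$) to ensure these conditions throughout the homotopy.

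After Stage~1 we have a map $\Ahom_1:\Ksp\to\EAd$ that equals the identity on the collar $U$, with $\Ahom_1(\Ldf)\subset\DApl\cap\EAd=\DAd$. Stage~2 replaces $\Ahom_1(k)$ outside a fixed neighborhood $\Uman_\eps$ of $\orig$ by a diffeomorphism fixed on $\partial\disk$. Choose $\eps>0$ uniform in $k$ by compactness of $\Ksp$ so that each $\Ahom_1(k)$ is a diffeomorphism on $\Uman_{2\eps}$, and on the annulus $\disk\setminus\Uman_\eps$ interpolate $\Ahom_1(k)|_{\partial\Uman_\eps}$ to $\id_{\partial\disk}$ through orbit-preserving diffeomorphisms of the annulus. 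These are classified by their shift functions on each orbit, which can be smoothly interpolated across orbits to give the required family in $\DAd$; the homotopy can be chosen constant on $\Ldf$ (since those are already in $\DAd$) and within $\EAd$ on $\Kpc$.

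The main obstacle is preserving $\DApl$ on $\Ldf$ during Stage~1, since naively damping the shift function can destroy the Lie-derivative condition $\AFld(\Lambda)>-1$ characterizing diffeomorphisms. This forces the careful quantitative choice of $\mu$ above, balancing its derivative against the bounds on $\Lambda$. Stage~2 is technical but more routine, relying on Lemma~\ref{lm:change_to_diff} together with the identity collar left by Stage~1.
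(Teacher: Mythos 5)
Your skeleton matches the paper's up to a point: you obtain a shift function $\Lambda$ for the deformation from Lemma~\ref{lm:ext_shift_func_via_homotopy} (using case (i) or simple connectedness in case (ii)), normalize it by $n\theta$ so that it vanishes on $\Kpc\times\partial\disk$ via connectedness of $\Kpc$ and discreteness of the boundary values, and then damp $\Lambda$ by a cutoff. But there is a genuine gap at exactly the point you flag as delicate. Your cutoff $\mu$ is a general function on $\disk$, and you propose to control the error term $-s\,\AFld(\mu)\,\Lambda$ by taking $\mu$ ``flat enough''. First, flatness is the wrong notion: $\AFld(\mu)$ is the derivative of $\mu$ \emph{along orbits}, so it is small only if $\mu$ is nearly constant on each orbit, not if $\mu$ varies slowly in the radial direction (and the bound you want is $\sup|\AFld(\mu)|\cdot\sup|\Lambda|$ small, not $\sup|\AFld(\mu)|$ small relative to $\sup|\Lambda|$). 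Second, even granting $\sup|\AFld(\mu)|$ arbitrarily small, the condition you must preserve on $\Ldf$ is $-s\,\AFld(\mu)\,\Lambda+(1-s\mu)\,\AFld(\Lambda_k)>-1$, and the margin $1+\AFld(\Lambda_k)$ need not be bounded below by a positive constant uniformly over $k\in\Ldf$: the proposition allows $\Ldf$ to be an arbitrary, possibly non-closed subset, and near a point of $\overline{\Ldf}\setminus\Ldf$ where $\omega(k)\notin\DApl$ the quantity $1+\AFld(\Lambda_k)$ can come arbitrarily close to zero on the support of $d\mu$. Hence no fixed $\mu$ with $\AFld(\mu)\not\equiv 0$ works. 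The paper's key device kills the error term identically rather than estimating it: the cutoff is taken to be a function of the first strong integral, $\nu\circ\func$, which is constant along orbits, so $\AFld(\nu\circ\func)\equiv 0$ and $\AFld\bigl(((1-t)\,\nu\circ\func+t)\Lambda_k\bigr)=((1-t)\,\nu\circ\func+t)\,\AFld(\Lambda_k)>-1$ wherever $\AFld(\Lambda_k)>-1$, with no quantitative hypotheses whatsoever. Your argument is repaired precisely by setting $\mu=1-\nu\circ\func$.

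Your Stage~2 also carries weight the paper avoids. The paper first chooses $b$ (your uniform $\eps$, but placed \emph{before} the homotopy) so that $\Omega_k:\Uman_b\to\Uman_b$ is a diffeomorphism for all $k$, which uses compactness of $\Ksp\times\disk$ and $r\geq1$ in case (ii); then the single cutoff homotopy already ends in $\DAd$: the endpoint is the identity outside $\Uman_b$, and on $\Uman_b$ the exactly preserved Lie-derivative condition makes it a local diffeomorphism everywhere, hence a diffeomorphism (on each closed orbit it is a degree-one circle map with nonvanishing derivative). So no parametric version of Lemma~\ref{lm:change_to_diff} and no orbit-wise interpolation of shift functions --- whose continuity in $k$ and preservation of $\DApl$ over $\Ldf$ you only sketch, and which would face the same Lie-derivative control problem --- is needed. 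With the cutoff taken orbit-constant and $a<b$ as above, your Stage~1 alone proves the proposition.
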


The proof will be given at the end of this section.
Let 
$$ 
\Omega:\Ksp\times\disk\to\disk,
\qquad
\Omega(k,z) = \omega(k)(z)
$$ 
be the corresponding $(\Ksp,r)$-deformation in $\EApl$.
Then by Lemma~\ref{lm:ext_shift_func_via_homotopy} there exists a shift function $\Lambda:\Ksp\times(\disko)\to\RRR$ for $\Omega$.
The deformation of $\Omega_k$ we will be produced via a deformation of $\Lambda_k$.

Let $a,b\in(0,1)$ be such that $a<b$, $\func:\disk\to[0,1]$ be the first strong integral for $\AFld$, see Definition~\ref{def:first_strong_int}, and $\nu:[0,1]\to[0,1]$ be a $\Cinf$ function such that $\nu\bigl[0,a]=1$ and $\nu[b,1]=0$.
Define the following function $\afunc:\Ksp\times(\disko)\to\RRR$ by
\begin{equation}\label{equ:a_nuf_L}
\afunc(k,z) = \nu\circ\func(z) \; \cdot \; \Lambda(k,z), \qquad (k,z)\in\Ksp\times(\disko),
\end{equation}
the map $\Omega':\Ksp\times\disk\to\disk$ by
\begin{equation}\label{equ:g_shift_afunc}
\Omega'(k,z)=
\begin{cases}
\AFlow(z,\afunc(k,z)), & z \not=\orig, \\
\orig, & z=\orig,
\end{cases}
\end{equation}
and a homotopy $A:I \times\Ksp\times \disk \to \disk$ by
\begin{equation}\label{equ:homotopy_to_diffeo}
A(t,k,z) = 
\begin{cases}
\AFlow(z, \, (1-t)\afunc(k,z)+ t \Lambda(k,z) ), & z\not=\orig, \\
\orig, & z=\orig.
\end{cases}
\end{equation}

\begin{lemma}\label{lm:prop_homot_A}
For $t,k\in I\times \Ksp$ denote 
$A_t=A(t,\cdot,\cdot):\Ksp\times\disk\to\disk$ and
$A_{t,k}=A(t,k,\cdot):\disk\to\disk$.
Then 
\begin{enumerate}
 \item[\rm(a)] 
$A_0=\Omega$, $A_{1}=\Omega'$, and
$A_t$ is a $(\Ksp,r)$-deformation in $\EA$ for each $t\in I$.
 \item[\rm(b)]
$\Omega'_{k}$ is fixed on $\disk\setminus\Uman_{b}$ for all $k\in\Ksp$.
In particular, $A_1=\Omega'$ is a deformation in $\EAd$.
 \item[\rm(c)]
If for some $(k,z)\in \Ksp\times \disk$ the map $\Omega_{k}$ is a local diffeomorphism at $z$, then so is $A_{t,k}$ for each $t\in I$.
 \item[\rm(d)]
Denote $\Zsp=\Lambda^{-1}(0)  \subset \Ksp\times (\disko)$.
Thus $\Omega(k,z)=z$ for all $(k,z)\in\Zsp$.
Then $$A(t,k,z)=z, \qquad \forall\,t\in I, \ (k,z)\in\Zsp.$$
 \item[\rm(e)]
Let $\Kpc\subset\Ksp$ be a connected subset such that $\Omega_{k}$ is fixed on $\partial\disk$ for each $k\in\Kpc$ and $\Lambda_{k_0}|_{\partial\disk}=0$ 
for some $k_0\in\Kpc$.
Then $A_{t,k}$ is also fixed on $\partial\disk$ for all $(t,k)\in I\times\Kpc$.
\end{enumerate}

Thus $A$ induces a homotopy
\begin{equation}\label{equ:omega_t_induced_by_A}
\Ahom_t:\Ksp\to\EAplr{r}, \qquad \Ahom_t(k)(z) = A(t,k,z)
\end{equation}
such that $\Ahom_0=\omega$, and $\Ahom_1(\Ksp)\subset\EAd$.
\end{lemma}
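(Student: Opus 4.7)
The strategy is to verify (a)--(e) in turn, each reducing to either a direct substitution in the defining formulas \eqref{equ:a_nuf_L}--\eqref{equ:homotopy_to_diffeo} or to a short Lie-derivative computation using $\AFld(\func)\equiv 0$. The single preliminary observation I would record first is that on the neighbourhood $\Uman_a$ of $\orig$ one has $\nu\circ\func\equiv 1$, so $\afunc(k,\cdot)\equiv\Lambda(k,\cdot)$ and consequently
\begin{equation*}
\Omega'_k|_{\Uman_a}=A_{t,k}|_{\Uman_a}=\Omega_k|_{\Uman_a}
\end{equation*}
for all $(t,k)\in I\times \Ksp$. This identity is essential because $\Lambda_k$ need not extend smoothly across $\orig$ when $\nabla\AFld$ is degenerate (see Proposition~\ref{pr:info_on_TC} and Lemma~\ref{lm:exist_shift_func}); yet it at once guarantees that each $A_{t,k}$ is smooth at $\orig$, sends $\orig$ to $\orig$, and has the same $1$-jet at $\orig$ as $\Omega_k\in\EApl$, so in particular has positive Jacobian at $\orig$.

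With this in hand, (a), (b), and (d) are routine. The endpoint identities of (a) come from evaluating \eqref{equ:homotopy_to_diffeo} at $t=0,1$; orbit preservation is automatic because $A_{t,k}$ is by construction a shift along orbits of $\AFld$, with the origin fixed by decree; and continuity of the $r$-jet of $A_{t,k}$ in $(t,k)$ is inherited from $\Omega$ being a $(\Ksp,r)$-deformation together with smoothness of $\nu\circ\func$ and $\AFlow$. For (b), $\nu\circ\func\equiv 0$ on $\disk\setminus\Uman_b$, hence $\afunc(k,z)=0$ and $\Omega'_k(z)=\AFlow(z,0)=z$ there. For (d), $\Lambda(k,z)=0$ forces $\afunc(k,z)=\nu(\func(z))\cdot 0=0$, so the interpolant vanishes and $A(t,k,z)=z$.

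The technical heart is (c). By Lemma~\ref{lm:when_h_in_DApl}, $A_{t,k}$ is a local diffeomorphism at $z$ iff $\AFld\bigl((1-t)\afunc_k+t\Lambda_k\bigr)(z)\neq -1$. Since $\AFld(\func)\equiv 0$, the Leibniz rule yields $\AFld(\nu\circ\func)=(\nu'\circ\func)\AFld(\func)\equiv 0$, whence $\AFld(\afunc_k)=(\nu\circ\func)\AFld(\Lambda_k)$, so that
\begin{equation*}
\AFld\bigl((1-t)\afunc_k+t\Lambda_k\bigr)(z) \;=\; \mu(t,z)\,\AFld(\Lambda_k)(z), \qquad \mu(t,z):=(1-t)\nu(\func(z))+t\in[0,1].
\end{equation*}
The argument then concludes provided that $\AFld(\Lambda_k)(z)>-1$ at every $z\in\disko$ where $\Omega_k$ is a local diffeomorphism, since then $\mu\,\AFld(\Lambda_k)(z)>-\mu\geq -1$. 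This strict-versus-weak distinction is the main obstacle I anticipate: the bare $\neq -1$ provided by Lemma~\ref{lm:when_h_in_DApl} is not enough, since values of $\AFld(\Lambda_k)(z)$ below $-1$ would allow some $\mu\in(0,1)$ to make the product equal $-1$. The strict inequality has to be extracted from the orientation-preservation condition $\Omega_k\in\EApl$ via the shift-map identity $\det J\Omega_k=1+\AFld(\Lambda_k)$ in coordinates adapted to the orbits of $\AFld$, combined with positivity of the Jacobian at $\orig$ and the fact that the sign of $\det J\Omega_k$ propagates through the local-diffeomorphism locus of $\Omega_k$ by continuity.

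Finally, (e) is a monodromy argument on the boundary circle. For $k\in\Kpc$ the identity $\Omega_k(z)=z$ for $z\in\partial\disk$ forces $\Lambda(k,z)\in\theta_0\ZZZ$, where $\theta_0$ is the constant period of the boundary orbit. The integer-valued function $(k,z)\mapsto \Lambda(k,z)/\theta_0$ is continuous on the connected space $\Kpc\times\partial\disk$, hence constant, and the normalisation $\Lambda_{k_0}|_{\partial\disk}=0$ forces this constant to vanish. Thus $\Lambda\equiv 0$ on $\Kpc\times\partial\disk$, and (d) yields $A_{t,k}|_{\partial\disk}=\id$ for all $(t,k)\in I\times\Kpc$. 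Combining (a)--(e) with the opening observation that every $A_{t,k}$ has positive Jacobian at $\orig$, the map $\Ahom_t(k):=A_{t,k}$ defines a homotopy $\Ksp\to\EAplr{r}$ with $\Ahom_0=\omega$ and $\Ahom_1(\Ksp)\subset\EAd$, as required.
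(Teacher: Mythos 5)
Your proposal reproduces the paper's own proof essentially step for step: (a), (b), (d) are obtained, exactly as in the paper, by direct substitution into \eqref{equ:a_nuf_L}--\eqref{equ:homotopy_to_diffeo}; for (c) you use the same shift function $\bfunc_{t,k}=\bigl((1-t)\nu\circ\func+t\bigr)\Lambda_k$ and the same Leibniz computation, with the first summand killed by $\AFld(\nu\circ\func)=(\nu'\circ\func)\,\AFld(\func)=0$ and the factor $\mu(t,z)=(1-t)\nu(\func(z))+t\in[0,1]$; and for (e) you give the identical monodromy argument: $\Lambda_k(\partial\disk)\in\theta(\partial\disk)\cdot\ZZZ$ for $k\in\Kpc$, continuity of $\Lambda$ on the connected set $\Kpc\times\partial\disk$ plus discreteness of these values force $\Lambda$ to be constant there, the normalization $\Lambda_{k_0}|_{\partial\disk}=0$ makes the constant zero, and (d) finishes. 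Your preliminary observation that $A_{t,k}=\Omega_k$ on $\Uman_{a}$ (since $\nu\circ\func\equiv1$ there, so $\bfunc_{t,k}=\Lambda_k$) is not stated in the paper but is exactly what makes smoothness of $A_{t,k}$ at $\orig$, and hence the claim $A_{t}\in\EA$ in (a), transparent; it is a worthwhile addition.

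The one substantive divergence is in (c), and your diagnosis is in fact sharper than the paper's text: the paper simply asserts the strict bound $\AFld(\Lambda_k)(z)>-1$ (its \eqref{equ:F_L_ge_1}) citing Lemma~\ref{lm:when_h_in_DApl}, although that lemma yields only $\AFld(\Lambda_k)(z)\neq-1$, and the strict bound is genuinely needed because $\mu(t,z)$ sweeps down to $\nu(\func(z))$, possibly to $0$. However, your proposed repair --- that ``the sign of $\det J\Omega_k$ propagates through the local-diffeomorphism locus of $\Omega_k$ by continuity'' --- does not work at an arbitrary point $z$: the local-diffeomorphism locus need not be connected, and $\Omega_k\in\EApl$ constrains the Jacobian only at $\orig$. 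Indeed, a shift $\ShA(\afunc)\in\imShA\subset\EApl$ with $\afunc$ supported in an annulus away from $\orig$ and with $\AFld(\afunc)<-1$ at some point is there an \emph{orientation-reversing} local diffeomorphism, so at such a point the strict bound, and with it the literal statement of (c), fails for suitable $t$. What rescues the lemma is the way it is used: in the proof of Proposition~\ref{pr:deform_in_EApl}, item (c) is invoked only where $\Omega_k$ is a diffeomorphism of the connected invariant set $\Uman_{b}\supset\{\orig\}$ onto itself, or of all of $\disk$ (for $k\in\Ldf$); on a connected domain containing $\orig$ positivity of the Jacobian at $\orig$ does propagate, giving $\det J\Omega_k>0$, hence $\AFld(\Lambda_k)>-1$ via your identity, and then $\mu\,\AFld(\Lambda_k)(z)\geq\min\{\AFld(\Lambda_k)(z),0\}>-1$ closes the argument. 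So your fix is valid precisely where the lemma is applied; to make (c) literally correct one should either strengthen its hypothesis to ``$\Omega_k$ is an orientation-preserving local diffeomorphism at $z$'' or read it, as the paper implicitly does, only on such connected domains.
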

\begin{proof}
Statements (a) and (b) follows from~\eqref{equ:a_nuf_L}-\eqref{equ:homotopy_to_diffeo}.

(c) 
Denote 
\begin{equation}\label{equ:beta_func}
\bfunc_{t,k}(z) = (1-t)\afunc(k,z)+ t \Lambda(k,z) =
\bigl( (1-t)\nu\circ\func(z) + t \bigr) \cdot \Lambda_{k}(z).
\end{equation}
Then by~\eqref{equ:homotopy_to_diffeo} $\bfunc_{t,k}$ is a shift function for $A_{t,k}$ on $\disko$.

The assumption that $\Omega_{k}$ is a local diffeomorphism at $z$ means that 
\begin{equation}\label{equ:F_L_ge_1}
\AFld(\Lambda_{k})(z)>-1,
\end{equation}
see Lemma~\ref{lm:when_h_in_DApl}.
Therefore by that lemma it suffices to verify that $\AFld(\bfunc_{t,k})(z)>-1$ for all $t\in I$.

Notice that 
$$ 
\AFld\bigl( ((1-t)\nu\circ\func + t ) \cdot \Lambda_{k}\bigr)=
 (1-t)\AFld(\nu\circ\func) +  \bigl((1-t)\nu\circ\func + t \bigr) \AFld(\Lambda_{k}).
$$ 
The first summand is zero since $\func$ and therefore $\nu\circ\func$ are constant along orbits of $\AFld$.
Moreover, $0\leq \nu(z) \leq 1$, whence we get from~\eqref{equ:F_L_ge_1} that the second summand is $>-1$.
Hence $\AFld(\bfunc_{t,k})(z) >-1$ for all $k\in\Ksp$.

(d)
If $\Lambda(k,z)=0$ for some $(k,z)\in \Ksp\times(\disko)$, then by~\eqref{equ:beta_func} $\bfunc_{t,k}(z)=0$, whence 
$A(t,k,z) = \AFlow(z,\bfunc_{t,k}(z)) =\AFlow(z,0)=z$.

(e)
If $\Omega_k$ is fixed on $\partial\disk$ for some $k\in\Ksp$, then $\Lambda_{k}$ takes on $\partial\disk$ a constant value:
$$
\Lambda_k(\partial\disk) \ = \ n_k \cdot \theta(\partial\disk)
$$
for some $n_k\in\ZZZ$.
Since $\Kpc$ is connected, $\Lambda$ is continuous on $\Kpc\times\partial\disk$, and possible values of $\Lambda_k$ on $\partial\disk$ are discrete, it follows that $\Lambda$ is constant on $\Kpc\times\partial\disk$.
In particular, $\Lambda|_{\Kpc\times\partial\disk} = \Lambda_{k_0}|_{\Kpc\times\partial\disk}=0$.
Then by (d) $A_{t,k}$ is fixed on $\partial\disk$ for all $(k,t)\in I\times\Kpc$.
\end{proof}

\subsection*{Proof of Proposition~\ref{pr:deform_in_EApl}.}
We will find $a,b\in(0,1)$ and a shift function $\Lambda:\Ksp\times(\disko)\to\RRR$ for $\Omega$ such that the corresponding homotopy $\Ahom_t$ constructed in Lemma~\ref{lm:prop_homot_A} will satisfy~\eqref{equ:Ahom_01} and~\eqref{equ:omegat_L_Dapl__K_EAd}.

\myemph{Choice of $\Lambda$.}
Let $\Lambda':\Ksp\times(\disko)\to\RRR$ be any shift function for $\Omega$.
Since $\Omega_{k_0}$ is fixed on $\partial\disk$ for some $k_0\in\Kpc$, we have that $\Lambda'_{k_0}|_{\partial\disk}=n\theta(\partial\disk)$ for some $n\in\ZZZ$.

Define another function $\Lambda:\Ksp\times(\disko)\to\RRR$ by 
$$\Lambda(k,z) \;=\; \Lambda'(k,z)\;-\;n\,\theta(\partial\disk).$$
Then $\Lambda$ is also a shift function for $\Omega$ in the sense of~\eqref{equ:Omega_ShA_Lambda} and satisfies 
\begin{equation}\label{equ:Lambda_pD2_0}
\Lambda_{k_0}|_{\partial\disk}=0.
\end{equation}

\myemph{Choice of $a,b\in(0,1)$.}
Notice that $\Omega_{k}(\Uman_{b})=\Uman_{b}$ for all $k\in\Ksp$ and $b\in(0,1]$.
We claim that \myemph{there exists $b\in (0,1)$ such that the map $\Omega_{k}:\Uman_{b}\to\Uman_{b}$ is a diffeomorphism for all $k\in I$.}
Indeed, by definition of $\EApl$ the map $\Omega_{k}$ is a diffeomorphism at $\orig$ for each $k\in\Ksp$.
This implies existence of $b$ in the case (i), i.e. when $\Ksp$ is a point.
In the case (ii) the assumption $r\geq1$ means that the partial derivatives of $\Omega_{k}$ are continuous functions on $\Ksp\times\disk$.
Then existence of $b$ follows from compactness of $\Ksp\times\disk$.

\medskip

Take arbitrary $a\in(0,b)$ and let $\Ahom_t$ be a homotopy constructed in Lemma~\ref{lm:prop_homot_A} for $\Lambda$ and $a,b$.
We claim that $\Ahom$ satisfies~\eqref{equ:Ahom_01} and~\eqref{equ:omegat_L_Dapl__K_EAd}.

By (a) of Lemma~\ref{lm:prop_homot_A} $\Ahom_0=\omega$.

Let us prove that $\Ahom_1(\Ksp)\subset\DAd$, i.e. for each $k\in\Ksp$ the map $\Ahom_{1,k}=\Omega'_{k}$ is a diffeomorphism of $\disk$ fixed on $\partial\disk$.
By (b) of Lemma~\ref{lm:prop_homot_A} $\Omega'_{k}$ is fixed even on $\disk\setminus\Uman_{b}$.
Moreover by assumption on $b$ we have that $\Omega_{k}:\Uman_{b}\to\Uman_{b}$ is a diffeomorphism, whence by (c) of Lemma~\ref{lm:prop_homot_A} $A_{t,k}=\Ahom_t(k)$ is also a self-diffeomorphism of $\Uman_{b}$ and therefore of all $\disk$.

By assumption $\Omega_{k}:\disk\to\disk$ is a diffeomorphism for all $k\in\Ldf$.
Then again by (c) of Lemma~\ref{lm:prop_homot_A} $A_{t,k}=\Ahom_t(k)$ is also a self-diffeomorphism $\disk$ for all $k\in\Ldf$.
In other words $\Ahom_t(\Ldf)\subset \DApl$.

Finally, the inclusion $\Ahom_t(\Kpc)\subset\EAd$ follows from~\eqref{equ:Lambda_pD2_0} and (e) of Lemma~\ref{lm:prop_homot_A}.

\section{Proof of Theorem~\ref{th:DA_EA_weak_he}}\label{sect:proof:th:DA_EA_weak_he}
First we prove (1) and (2) for the inclusions $\DApl\subset\EApl$ and $\DAd\subset\EAd$.
Then we establish (3) and deduce from it (1) and (2) for the inclusion $\DA\subset\EA$.

(1) We have to show that $\pi_n(\EE^{r},\DD^{r})=0$ for all $n\geq0$ if $r\geq1$
Then the result will follow from exact homotopy sequence of the pair $(\EE^{r},\DD^{r})$.

Let $\omega:(I^n,\partial I^n)\to(\EE^{r},\DD^{r})$ 
be a continuous map representing some element of the relative homotopy set $\pi_n(\EE^{r},\DD^{r})$.
Our aim is to show that $\omega$ is homotopic as a map of pairs to a map into $\DD$, i.e. $\omega=0$ in $\pi_n(\EE^{r},\DD^{r})$, whence we will get $\pi_n(\EE^{r},\DD^{r})=0$.

{\em\myemph{Inclusion $\DApl\subset\EApl$.}}
If $r\geq1$, then applying Proposition~\ref{pr:deform_in_EApl} to the case $\Ksp=I^n$, $\Ldf=\partial I^n$ we obtain that $\omega$ is homotopic as a map of pairs into $\DApl$.

{\em\myemph{Inclusion $\DAd\subset\EAd$.}}
Since $$(\EAd,\DAd)\ \subset\ (\EApl,\DApl),$$ we see that $\omega$ is also an element of $\pi_n(\EAplr{r},\DAplr{r})$, which as just shown is trivial.
Then Proposition~\ref{pr:deform_in_EApl} can be applied to the case $\Ksp=\Kpc=I^n$ and $\Ldf=\partial I^n$, and we obtain that $\omega$ is homotopic as a map of pairs $(K,L)\to(\EAdr{r},\DAdr{r})$ to a map into $\DAdr{r}$.
Hence $\omega=0$ in $\pi_n(\EAdr{r},\DAdr{r})$.

\medskip

(2)
We have to show that the map $\pi_0\DD^{0}\to\pi_0\EE^{0}$ is surjective for all $r\geq0$.
Let $\hdif\in\EApl$.
It can be regarded as a map from the set $\Ksp$ consisting of a unique point into $\EA$:
$$
\omega:\Ksp\to\EE, \qquad
\omega(\Ksp)=\hdif.
$$
Then applying Proposition~\ref{pr:deform_in_EApl} we obtain that $\omega$ is $\Cinf$-homotopic to a map into $\DAd$,
whence the inclusion $\DAd\subset\EApl$ yields a surjecitve map $\pi_0\DAdr{r}\to\pi_0\EAplr{r}$ for all $r\geq0$.
Therefore in the following diagram induced by inclusions all arrows are surjective:
\begin{equation}\label{equ:pi0_epimorphisms}
\begin{CD}
\pi_0\DAdr{r} @>>> \pi_0\EAdr{r} \\
@VVV @VVV \\
\pi_0\DAplr{r} @>>> \pi_0\EAplr{r}
\end{CD}
\end{equation}
The proof of the surjectivity $\pi_0\DAr{r}\to\pi_0\EAr{r}$ is the same as in (1).

\medskip 

(3) 
It is well known and is easy to prove that \myemph{for a topological semigroup $\EE$ the set $\pi_0\EE$ of path components of $\EE$ admits a natural semigroup structure such that the natural projections $\EE\to \pi_0\EE$ is a semigroup homomorphism.
If $\EE$ is a group, then so is $\pi_0\EE$.}

\myemph{If $\DD\subset\EE$ is a subsemigroup, then the induced map $\pi_0\DD\to\pi_0\EE$ is a semigroup homomorphism.}

In our case $\EE^r$ is a topological semigroup and $\DD^r$ is a topological group.
From (1) we get that for $r\geq1$ the homomorphism $\pi_0\DD\to\pi_0\EE$ is a bijection, whence it is a semigroup isomorphism.
But $\pi_0\DD$ is a group, whence so is $\pi_0\EE$.

Let us prove that \myemph{all path components of $\EE$ are homeomorphic each other}.
By (1) and (2) $i_{0}:\pi_0\DD\to\pi_0\EE$ is surjective for any $\Wr{r}$ topologies, $r\geq0$.
In particular, this implies that each path component of $\EE$ contains an invertible element.
Now the result is implied by the following statement:
\begin{claim}
Let $\EE$ be a topological semigroup such that each path component of $\EE$ contains an invertible element.
Then all path components of $\EE$ are homeomorphic each other.

Moreover, let $\DD$ be the subgroup consisting of all invertible elements.
Then for any two path components $\EE_1$ and $\EE_2$ there exists a homeomorphism $Q:\EE_1\to\EE_2$ such that $Q(\EE_1\cap\DD) =\EE_2\cap\DD$.
\end{claim}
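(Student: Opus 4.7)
The plan is to exploit left translation by invertible elements. For each $h\in\DD$, define the left multiplication map $L_h:\EE\to\EE$ by $L_h(x)=hx$. Since multiplication in a topological semigroup is continuous, $L_h$ is continuous, and the identity $L_h\circ L_{h^{-1}}=L_{h^{-1}}\circ L_h=\id_{\EE}$ shows that $L_h$ is a homeomorphism of $\EE$ onto itself with inverse $L_{h^{-1}}$. In particular, $L_h$ sends path components onto path components.

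Given two path components $\EE_1$ and $\EE_2$ of $\EE$, the hypothesis supplies invertible elements $g_1\in\EE_1\cap\DD$ and $g_2\in\EE_2\cap\DD$. Put $h=g_2g_1^{-1}\in\DD$. Then $L_h(g_1)=g_2$, so $L_h$ sends the path component of $g_1$, namely $\EE_1$, onto the path component of $g_2$, namely $\EE_2$. Setting $Q:=L_h|_{\EE_1}:\EE_1\to\EE_2$ yields a homeomorphism whose inverse is $L_{h^{-1}}|_{\EE_2}$.

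For the refined statement, note that $\DD$ is closed under left multiplication by elements of $\DD$: if $x\in\DD$, then $hx$ has the two-sided inverse $x^{-1}h^{-1}$, hence $hx\in\DD$. Therefore $Q(\EE_1\cap\DD)\subseteq\EE_2\cap\DD$, and applying the same reasoning to the inverse $L_{h^{-1}}|_{\EE_2}$ yields the reverse inclusion, giving $Q(\EE_1\cap\DD)=\EE_2\cap\DD$. The only non-trivial point is the observation that left translation by an invertible element is a homeomorphism; everything else is bookkeeping, so no genuine obstacle arises. (A symmetric argument via right translations would work equally well.)
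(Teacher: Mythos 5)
Your proof is correct and follows essentially the same route as the paper: both construct the homeomorphism $Q(\hdif)=\hdif_2\hdif_1^{-1}\hdif$ from invertible elements $\hdif_1\in\EE_1$, $\hdif_2\in\EE_2$, with inverse $\gdif\mapsto\hdif_1\hdif_2^{-1}\gdif$, and observe that it preserves the invertible elements. Your framing via left translations $L_h$ and the explicit check that $\DD$ is stable under multiplication by invertibles merely spells out details the paper's one-line proof leaves to the reader.
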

\begin{proof}
Let $\hdif_1\in\EE_1$ and $\hdif_2\in\EE_2$ be any invertible elements.
Then we can define a homeomorphism $Q:\EE_1\to\EE_2$ by $Q(\hdif) = \hdif_2\cdot \hdif_1^{-1} \cdot \hdif$.
Evidently it is continuous, its inverse is given by $Q^{-1}(\gdif) = \hdif_1\cdot \hdif_2^{-1} \cdot \gdif$, and $Q(\EE_1\cap\DD) =\EE_2\cap\DD$.
\end{proof}

\medskip 

(1) and (2) for the inclusion $\DA\subset\EA$.
Put $$\DD'=\DA\setminus\DApl, \qquad \EE'=\EA\setminus\EApl.$$
Then $\EE'$ consists of full path components of $\EA$ with respect each of $\Wr{r}$ topologies.
Hence we have to prove our statement for the inclusion $\DD'\subset\EE'$.
We can also assume that $\EE'\not=\varnothing$.

By Lemma~\ref{lm:change_to_diff} there exists $\gdif\in\DD'$.
Hence we can define the following map $Q:\EE'\to\EApl$ by $Q(\hdif) = \gdif^{-1}\circ \hdif$ for $\hdif\in\EE'$.
Evidently, $Q$ is a homeomorphism onto with respect to any of $\Wr{r}$ topologies.
Moreover, $Q(\DD')=\DApl$.
Hence $\pi_n(\EE',\DD') = \pi_n(\EApl,\DApl)$.
It remains to note that by $\pi_n(\EApl,\DApl)=0$ if either $r\geq1$ and $n\geq0$, or $r=0$ and $n=0$.

\section{Proof of Theorem~\ref{th:EAd_EApl}}\label{sect:proof:th:EAd_EApl}
The proof is similar to the one given in \S\ref{sect:proof:th:DA_EA_weak_he}.
Let $$\omega:(I^n,\partial I^n)\to(\EAplr{r},\EAdr{r})$$ be a continuous map being a representative of some element in the relative homotopy set $\pi_n(\EAplr{r},\EAdr{r})$.

We have to show that $\omega$ is homotopic as a map of pairs to a map into $\EAd$.
Again we will apply Proposition~\ref{pr:deform_in_EApl} but now the situation is more complicated.

For $n\not=1$ denote $\Ksp=I^n$ and $\Kpc=\partial I^n$.
Then $\Kpc$ is path connected and by Proposition~\ref{pr:deform_in_EApl} $\omega$ is homotopic as map of pairs $$(\Ksp,\Kpc)\to (\EAplr{r},\EAdr{r})$$ to a map into $\EAd$.
In this case we can take $b$ arbitrary, and therefore the arguments hold for the case $r=0$ as well.
This implies $\pi_n(\EAplr{r},\EAdr{r})=0$ for all $n\not=1$ and $r\geq0$.

Suppose $n=1$.
Then $I^1=[0,1]$ and $\partial I^1= \{0,1\}$ is not connected, so Proposition~\ref{pr:deform_in_EApl} can be applied only to each of path components $\{0\}$ and $\{1\}$ of $\partial I^1$, and this is the reason why
\begin{equation}\label{equ:pi1_EApl_EAd_Z}
\pi_1(\EAplr{r},\EAdr{r},\id_{\disk})\approx \ZZZ.
\end{equation}
To prove~\eqref{equ:pi1_EApl_EAd_Z} use $0\in I^1$ and $\id_{\disk}\in\EApl$ as base points, and thus assume that $\omega(0)=\id_{\disk}$.
Consider the $(I^1,r)$-deformation in $\EAplr{r}$ corresponding to $\omega$:
$$\Omega:I^1\times \disk\to\disk, \qquad \Omega(k,z)=\omega(k)(z).$$
Then $\Omega_0=\id_{\disk}$ and therefore the zero function $\Lambda_0=0$ is a shift function for $\Omega$.
By Lemma~\ref{lm:ext_shift_func_via_homotopy} $\Lambda_0$ extends to a \myemph{unique} $(I^1,r)$-deformation 
$$
\Lambda:I^1\times(\disko)\to\RRR
$$
being a shift function for $\Omega$ on $\disko$ in the sense of that lemma.
In particular the last function $\Lambda_1$ is a shift function for $\Omega_1\in\EAd$ which is fixed on $\partial\disk$.
Then by Proposition~\ref{pr:deform_in_EApl} applied to $\Kpc=\{1\}$, we get that $\Lambda_1$ takes constant value on $\partial\disk$ being an integer multiple of the period of orbit $\partial\disk$ with respect to $\AFld$.
Thus
$$
\Lambda(\partial\disk) = \rho_{\omega} \cdot \theta(\partial\disk)
$$
for some $\rho_{\omega}\in \ZZZ$.
Evidently, $\rho_{\omega}$ counts the number of ``full rotations'' of $\partial\disk$ during the homotopy $\Omega$.
We claim that the correspondence $\rho:\omega \mapsto \rho_{\omega}$ yields an isomorphism~\eqref{equ:pi1_EApl_EAd_Z}.

It is easy to see that $\rho$ induces a \myemph{surjective homomorphism}
$$
  R:\pi_1(\EAplr{r},\EAdr{r})\to\ZZZ.
$$

To show that $R$ is a monomorphism suppose that $\rho(\omega)=0$, so $\Lambda_0(\partial\disk)=\Lambda_1(\partial\disk)=0$.
Then it follows from (d) of Lemma~\ref{lm:prop_homot_A} that
$$A(t,0,z)=A(t,1,z)=z$$ for all $t\in I$ and $z\in\partial\disk$.
In other words, $\Ahom_t(0),\Ahom_t(1)\in\EAdr{r}$ for all $t\in I$.
Thus $\omega$ is homotopic to a map into $\EAd$ via a homotopy relatively $\partial I^1$, and therefore is represents a trivial element of $\pi_1(\EAplr{r},\EAdr{r})$.
This implies that $R$ is an isomorphism.

\section{Proof Theorem~\ref{th:EA_non-deg-cond}}\label{sect:proof:th:EA_non-deg-cond}
Let $\AFld$ be a \TC\ vector field on $\disk$ such that $\nabla\AFld$ is degenerate and $\ker\jet\subset\imShA$.

\medskip 

\NFZ\
If $\nabla\AFld=0$, then the relation $\imShA=\ker\jet$ follows from the assumption $\imShA\supset\ker\jet$ and \NFZ\ of Proposition~\ref{pr:info_on_TC}.

Suppose that $\{\id\}$ is the path component of $\id$ in $\LA$.
Since $\jet$ is continuous from $\Wr{r}$ topology of $\EA$ for $r\geq1$, it follows that $\EidAr{r} \subset  \ker\jet =\imShA \subset \EidAr{r}$.
This also implies~\eqref{equ:pi0_EApl_EA}.

\medskip 

\NFS\
Suppose that $\nabla\AFld =\left(\begin{smallmatrix} 0 & a \\ 0 & 0  \end{smallmatrix}  \right)$ for some $a\not=0$.
We have to show that $\imShA =  \EidAr{1}  = \jet^{-1}(\Apl)$.

It follows from the definition, see~\eqref{equ:Apl_Amn}, that $\Ag$ is a group, $\Apl$ is its unity component in $\GLRpl{2}$, and $\Amn$, $\Aopl$, $\Aomn$ are another path components of $\Ag$.
Since $\jet$ is continuous in $\Wr{r}$ topology of $\EApl$ for $r\geq 1$, it follows that inverse images of these path components are open-closed in $\EA$.
On the other hand $\imShA$ is path connected in all $\Wr{r}$-topologies, as a continuous image of path connected space $\Ci{\Mman}{\RRR}$, whence 
$$\imShA \ \subset \ \EidAr{1} \ \subset \ \jet^{-1}(\Apl).$$

Conversely, let $\hdif\in\jet^{-1}(\Apl)$, so 
$\jt{\hdif} = \left(\begin{smallmatrix}1 & a\tau \\ 0 & 1 \end{smallmatrix} \right)$ for some $\tau \in \RRR$.
We have to show that $\hdif\in\imShA$.

Evidently $\jt{\hdif}$ coincides with 
$$\exp(\tau\cdot \nabla\AFld) = \exp{\left(\begin{smallmatrix}0 & a\tau \\ 0 & 0 \end{smallmatrix} \right)}.$$
Consider the flow $(\AFlow_{t})$ of $\AFld$.
Then, $\jt{\AFlow_{t}}=\exp{\left(\begin{smallmatrix}0 & at \\ 0 & 0 \end{smallmatrix} \right)}$ for all $t\in\RRR$.
Hence $\jt{\AFlow_{\tau}}=\jt{\hdif}$.

Define the map $\gdif:\disk\to\disk$ by $\gdif(z) = \AFlow(\hdif(z),-\tau)=\AFlow_{-\tau}\circ\hdif$.
Then $\jt{\gdif}= \left(\begin{smallmatrix}1 & 0 \\ 0 & 1 \end{smallmatrix} \right)$, i.e. $\gdif\in\ker\jet \subset\imShA.$

In other words, $\gdif(z) = \AFlow(z,\afunc(z))$ for some $\afunc\in\Ci{\disk}{\RRR}$.
Put $\bfunc(z)= \afunc(z)+\tau$.
Then $\hdif(z) = \AFlow(z,\bfunc(z))$, i.e. $\hdif\in\imShA$.

\medskip

(3) 
Due to~\eqref{equ:pin_EAd_EApl} we have only to show that the mapping $$i_1:\pi_1\EAdr{r} \to \pi_1\EAplr{r}$$ induced by the inclusion is an isomorphism.
Moreover, by exactness of the sequence~\eqref{equ:exseq_EAd_EApl} it remains to show that $i_1$ is surjective.

Let $\omega:I\to\EAplr{r}$ be a continuous map representing a loop in $\EAplr{r}$, i.e. 
\begin{equation}\label{equ:omega_dI_id}
\omega(0)=\omega(1)=\id_{\disk}.
\end{equation}
We have to show that $\omega$ is $r$-homotopic relatively $\partial I$ to a map into $\EAdr{r}$.

It follows from~\eqref{equ:omega_dI_id} that $\omega(I)$ included into $\EidAplr{r}$ which by (1) and (2) coincides with $\imShA$.
Thus $\omega(I)\subset\imShA$.
Moreover, $\omega(\partial I)\subset \ker\jet$.
Then by Lemma~\ref{lm:deform_in_imShA} $\omega$ is homotopic to a map into $\ker\jet$ relatively $\partial I$.
Hence we can assume that $\omega$ is a loop in $\ker\jet$.

Consider the $(I,r)$-deformation corresponding to $\omega$:
$$
\Omega:I\times \disk\to\disk, \qquad \Omega(t,z)=\omega(t)(z).
$$
Then $\Omega_0=\Omega_1=\id_{\disk}$ and $\Omega_{k}\in\ker\jet\subset\imShA$ for all $k\in I$.

In particular, every $\Omega_k$ has a $\Cinf$ shift function $\Lambda_k:\disk\to\RRR$ defined on all of $\disk$.
Since $\nabla\AFld$ is degenerate, we have by Lemma~\ref{lm:exist_shift_func} that such $\Lambda_k$ is unique.
In particular, $\Lambda_0=\Lambda_1=0$.

Then it follows from Lemma~\ref{lm:ext_shift_func} that the map $\Lambda:I\times(\disko)\to\RRR$ defined by $\Lambda(k,z)=\Lambda_k(z)$ is a $(I,r)$-deformation being a shift function for $\Omega$.

Take any $a,b\in(0,1)$ such that $a<b$ and consider the homotopy $\Ahom_t$ of $\omega$ into $\EAd$ defined by~\eqref{equ:homotopy_to_diffeo}.
Since $\Omega(0,z)=\Omega(1,z)=z$ for all $z\in\disk$, we obtain from (d) of Proposition~\ref{pr:deform_in_EApl} $\Ahom_t(0,z)=\Ahom_t(1,z)=z$ for all $t\in I$.
In other words $\Ahom_t$ is a homotopy relatively $\partial I$.

\medskip

(4) This statement follows from \NFZ-(3) and the well known fact that any finite subgroup of $\GLRpl{2}$ is cyclic.

Theorem~\ref{th:EA_non-deg-cond} is completed.

\end{document}